\documentclass[11pt,a4paper]{amsart}
\usepackage{amsmath,amssymb, amsbsy}
\usepackage{latexsym}
\usepackage{color}
\usepackage{graphicx}
\usepackage{bm}
\usepackage{mathtools}

\usepackage[english]{babel}
\usepackage{amscd}
\usepackage{mathrsfs}
\usepackage{amsfonts}
\usepackage{amsthm}
\usepackage{xcolor}

\newtheorem{theorem}{Theorem}[section]
\newtheorem{proposition}[theorem]{Proposition}

\newcommand{\R}{\mathbb{R}}
\newcommand{\N}{\mathbb{N}}

\newcommand{\Ha}{\mathcal H_\alpha^{N-1}}

\newcommand{\SA}{{\mathbb S}^{N-1}_\alpha}

\allowdisplaybreaks
\begin{document}

\title[Grushin  spherical  harmonics]{Grushin spherical  harmonics in  non-degenerate\\ dimension one}
 
\author[R. Bramati]{Roberto Bramati}
\address{Dipartimento di Ingegneria Gestionale, dell'Informazione e della Produzione,
Universit\`a degli Studi di Bergamo, Viale Marconi 5, 24044 Dalmine (BG), Italy}
\email{roberto.bramati@unibg.it}

\author[P. Luzzini]{Paolo Luzzini}
\address{Dipartimento di Scienze e Innovazione Tecnologica,
Universit\`a degli Studi del Piemonte Orientale ``Amedeo Avogadro'',
Viale Teresa Michel 11, 15121 Alessandria, Italy}
\email{paolo.luzzini@uniupo.it}

\begin{abstract}
The Grushin Laplacian $\Delta_\alpha = \Delta_x +|x|^{2\alpha}\Delta_y $ is a degenerate elliptic operator in   $\mathbb{R}^{h+k}$ that degenerates on
$\{0\} \times \mathbb{R}^k$. We explicitly compute the Grushin spherical harmonics on the gauge sphere  
$\{(x,y) \in \mathbb{R}^{h+k} : |x|^{2(\alpha+1)}+(\alpha+1)^2|y|^2=1\}$ and the corresponding spectrum  when the non-degenerate dimension $h=1$, a case not covered by the existing literature. The eigenfunctions consist of homogeneous Grushin-harmonic polynomials of degrees $n \equiv 0,1 \pmod{\alpha+1}$, even and odd in the $x$-variable respectively, which can be described in terms of Jacobi polynomials. 
\end{abstract}

\keywords{Grushin operator, Jacobi polynomials, Spherical harmonics}

\subjclass[2020]{Primary 35P10, 33C45; Secondary 35H20, 43A90}

\maketitle

\section{Introduction}

Let \(h,k,\alpha\in\mathbb N\),   and set \(N=h+k\). We consider the  following operator in $\mathbb{R}^N$:
\begin{equation*}
\Delta_\alpha := \Delta_x+|x|^{2\alpha}\Delta_y,
\end{equation*}
where \(x\in\mathbb R^h\), \(y\in\mathbb R^k\), and \(\Delta_x\), \(\Delta_y\) denote the classical Laplace operators with respect to the \(x\)- and \(y\)-variables, respectively. Operators of this type are commonly referred to as Grushin or Baouendi--Grushin operators, after the foundational works of Baouendi \cite{Ba} and Grushin \cite{Gru70,Gru71}.

The operator \(\Delta_\alpha\) is not uniformly elliptic. Indeed, its coefficient matrix degenerates on the \(k\)-dimensional subspace
\begin{equation}\label{eq:def-sigma}
\Sigma:=\{(0,y)\in\mathbb R^h\times\mathbb R^k\}
\subset\mathbb R^N.
\end{equation}
Nevertheless, when \(\alpha\) is a positive integer, \(\Delta_\alpha\) can be written as a finite sum of squares of smooth vector fields. More precisely, for \(i=1,\ldots,h\), \(j=1,\ldots,k\), and \(\beta\) a multi-index with  \(|\beta|=\alpha\), let
\begin{equation}\label{vect}
X_i:=\partial_{x_i},
\qquad
Y_{\beta,j}:=
\sqrt{\frac{\alpha!}{\beta!}}x^\beta\partial_{y_j}.
\end{equation}
The multinomial formula then gives
\begin{equation*}
\Delta_\alpha
=\sum_{i=1}^h X_i^2
+\sum_{j=1}^k\sum_{|\beta|=\alpha}
Y_{\beta,j}^2.
\end{equation*}
Moreover, the vector fields in \eqref{vect} satisfy Hörmander's bracket-generating condition: their iterated commutators of length at most \(\alpha+1\) span the tangent space at every point of \(\mathbb R^N\). A well-known result by Hörmander \cite{Hor1967} therefore implies that \(\Delta_\alpha\) is hypoelliptic.

It is also possible to define and consider the Grushin operator when $\alpha >0$ is real, but H\"ormander's result does not directly apply, because the generating vector fields are not smooth, in general. This is one reason why the integer case is often treated separately.
There is also an extensive theory for operators generated by non-smooth vector fields; without
attempting to be exhaustive, we mention the seminal work of Franchi and Lanconelli \cite{FrLa82}.  

  We now turn to the setting considered in the present paper and from now on we assume that the non-degenerate variable is one-dimensional, namely $h=1$. In this case, the Grushin operator reads 
\[
\Delta_\alpha = \frac{\partial^2}{\partial x^2}+|x|^{2\alpha}\Delta_y.
\]

As is already known, see, e.g. De Bie and Lian \cite{DeLi25}, in suitable polar coordinates adapted to the natural anisotropic dilations associated with \(\Delta_\alpha\), the operator decomposes into a radial part and an angular operator \(\mathcal L\) acting on the unit gauge sphere (see Section \ref{prel} for precise definitions). The purpose of the present paper is to explicitly compute the eigenfunctions, i.e. the Grushin spherical harmonics, and the eigenvalues of the angular operator \(-\mathcal L\).

For \(h\geq 2\), explicit bases of Grushin spherical harmonics were obtained by Garofalo and Shen~\cite{GaSh94} in the case \(\alpha=1\), \(k=1\), and more recently by De Bie and Lian~\cite{DeLi25} for arbitrary \(k\geq1\) and integer \(\alpha\geq1\)  (see also Liu~\cite{Li18} for related results on homogeneous polynomial solutions). The case \(h=1\), however, is not covered by those constructions and was explicitly left open in~\cite{DeLi25}, where the authors pointed out that it must be treated separately. Indeed, for $h \geq 2$, the construction separates the angular dependence in
the $x$-variables by expanding in spherical harmonics on $\mathbb{S}^{h-1}$. When $h = 1$, this sphere
reduces to the two points $\mathbb{S}^0 = \{-1, 1\}$, so the relevant distinction is simply a symmetry. For this reason, when we compute the Grushin spherical harmonics,  we obtain  two families, one even  and the other odd in $x$. For $k=1$ these two families are  made of Gegenbauer polynomials, while for $k\geq2$ they are made of Jacobi polynomials.  Moreover, the admissible homogeneous degrees are
$$
n \equiv 0,1  \pmod{\alpha+1},
$$
with corresponding eigenvalue
$$
\frac{n(n+Q-2)}{(\alpha+1)^2},
$$
where $Q:=1+(\alpha+1)k$ is the so-called homogeneous dimension. 

The explicit spectral resolution of $-\mathcal{L}$ is relevant to unique continuation problems, so that addressing this remaining case is of interest. In \cite{GaSh94}, and later in~\cite{DeLi25}, weighted estimates for the Grushin-harmonic spectral projectors, obtained using the explicit form of the Grushin spherical harmonics, are a fundamental ingredient in the derivation of Carleman estimates and strong unique continuation results. In this sense, the construction developed here provides the spectral starting point for investigating analogous projector estimates when \(h=1\). Our results also provide a more explicit description of the asymptotic analysis carried out by Abatangelo, Ferrero and Luzzini in \cite{AbFeLu25}, where the blow-up homogeneity and the angular profile of a solution at a degenerate point are determined by an eigenvalue and an eigenfunction of \(-\mathcal L\). Knowing the spectrum therefore identifies the possible  vanishing orders and the corresponding blow-up profiles explicitly. This information is also potentially useful in singular perturbation problems, where the vanishing order of an eigenfunction determines the leading asymptotic behavior of the perturbed eigenvalues (see \cite{AbLeMu22}).

The paper is organized as follows. In Section~2 we recall the Grushin polar coordinates and the  spectral problem for the angular operator. In Section~3 and  Section~4 we respectively consider the cases \(k=1\) and   \(k\geq2\):  we construct the two families of spherical harmonics, we prove their completeness, and we compute the spectrum and its multiplicities. 

\section{Preliminaries}\label{prel}

In this section we collect some preliminaries about the problem.

\subsection{The Grushin operator in polar coordinates}
We first recall the decomposition of 
\[
\Delta_\alpha = \frac{\partial^2}{\partial x^2}+|x|^{2\alpha}\Delta_y
\]
in polar coordinates adapted to the Grushin geometry. To do this, we need some notation and definitions.
Recall that    $\Delta_y$  in standard polar coordinates in   $\mathbb{R}^k$ can be written as
\[
 \Delta_y  = \frac{\partial^2}{\partial t^2} +\frac{k-1}{t}\frac{\partial}{\partial t} + \frac{1}{t^2}\Delta_{\mathbb{S}^{k-1}} , 
\]
where $t = |y|$ is the radial variable in $\mathbb{R}^k$ and  $\Delta_{\mathbb{S}^{k-1}}$ is the Laplace-Beltrami operator on
 the $(k-1)$-sphere $\mathbb{S}^{k-1}$. Note that if  $k=1$ there are no corresponding angular variables, so that the above formula is understood with $t=y$ and 
 \[
 \Delta_y  = \frac{\partial^2}{\partial t^2}.
\]
In this way \begin{equation}\label{GP}
    \Delta_\alpha  =   \frac{\partial^2}{\partial x^2}  
+|x|^{2\alpha}\left(  \frac{\partial^2}{\partial t^2} +\frac{k-1}{t}\frac{\partial}{\partial t} + \frac{1}{t^2}\Delta_{\mathbb{S}^{k-1}} \right).
 \end{equation}
 Next, we  plan to use as a global radial variable the natural norm associated with the Grushin operator, usually referred to as {\it gauge norm}, given by
 \[
 \rho(x,y) :=\left(|x|^{2(\alpha+1)}+(\alpha+1)^2 |y|^2\right)^{\frac{1}{2(\alpha+1)}}
   \qquad  \forall (x,y)\in  \R^{1+k} \, .
 \]
 To do that, the right change of variables is
 \begin{equation}\label{chvar}
 (x,t) = \Phi(\rho,\varphi) := \left(\rho\, \mathrm{sgn}(\sin \varphi) |\sin \varphi|^\frac{1}{\alpha+1}, \frac{\rho^{\alpha+1}}{\alpha+1}\cos \varphi\right),
 \end{equation}
 with $\rho \in (0,+\infty)$ and 
 \begin{equation}\label{phidef}
 \begin{cases}
 \varphi \in (0,2\pi) \quad &\mbox{ if }\, h=1,k=1\\
 \varphi \in (-\frac\pi2,\frac\pi2) \quad &\mbox{ if }\, h=1,k\geq2.
 \end{cases}
 \end{equation}
 These coordinates were first introduced by Greiner \cite{Gr80} in the Heisenberg group and later adapted to the Grushin operator  by Garofalo and Shen \cite{GaSh94}. The coordinates in the general case can be found in \cite[Formula 5.21]{DaLu03}.
 
 The volume element under the above change of variable has the form
 \begin{equation}\label{vol}
  dxdy=\frac{\rho^{Q-1}}{(\alpha+1)^k} \, |\sin \varphi|^{-\frac{\alpha}{\alpha+1}} |\cos \varphi|^{k-1}
    d\rho \, d\varphi \,   d\mathcal H^{k-1} =\rho^{Q-1}d\rho\,d\mathcal{H}^{N-1}_{\alpha} , 
 \end{equation}
 where   
 \[
d\mathcal{H}^{N-1}_{\alpha}:=\frac{1}{|\nabla \rho|}d\mathcal{H}^{N-1}
 \]
 and $\mathcal H^{k-1}$ and $\mathcal H^{N-1}$ are respectively the Hausdorff measures on the sphere $\mathbb{S}^{k-1}$ and on  
 \[
 \SA:=\left\{(x,y) \in \mathbb{R}^{1+k}: \rho(x,y)=1\right\},
 \]
known as the gauge sphere (see \cite{AbFeLu25} for a detailed justification). We also need to introduce the angular function $\psi_\alpha$, defined by 
 \[
\psi_\alpha(x,y) := \frac{|x|^{2\alpha}}{\rho^{2\alpha}} = |\sin\varphi|^\frac{2\alpha}{\alpha+1} \quad \forall (x,y)\in \mathbb{R}^N \setminus \{0\} .
\]
Then, the Grushin operator in   polar coordinates  can be written as
\begin{equation}\label{GrPo}
\Delta_\alpha   = \psi_\alpha\left(\frac{\partial^2  }{\partial \rho^2}+\frac{(Q-1)}{\rho}\frac{\partial  }{\partial \rho}+\frac{(\alpha+1)^2}{\rho^2}\mathcal{L}\right),
\end{equation}
where $\mathcal{L}$ is the angular operator defined by 
\[
\mathcal{L} := \frac{\partial^2 }{\partial\varphi^2}+ \left(\frac{\alpha}{\alpha+1}\frac{\cos\varphi}{\sin\varphi}  -(k-1)\frac{\sin\varphi}{\cos\varphi}\right)\frac{\partial  }{\partial \varphi}+
\frac{1}{(\cos\varphi)^2}\Delta_{\mathbb{S}^{k-1}}
\]
and 
\[
Q:=  1+(\alpha+1)k
\] is the so-called {\it homogeneous dimension}.
Note that  Garofalo and Shen in \cite[p.135]{GaSh94}  derived formula \eqref{GrPo} in the case $k=1, \alpha=1$. The general case in arbitrary dimensions can be found in De Bie and Lian \cite{DeLi25}.

\subsection{The spectral problem for $-\mathcal{L}$}
We set $L^2_{\alpha}(\SA)$ to be the Hilbert space of square integrable functions on the gauge sphere $\SA$ with respect to the measure $\psi_\alpha d\mathcal{H}^{N-1}_\alpha$, endowed with the scalar product
\[
\langle u,v\rangle_{L^2_{\alpha}(\SA)}:= \int_{\SA} \psi_\alpha u \, v \, d\Ha \qquad
   \forall u,v\in L^2_{\alpha}(\SA) \, .
\]
The spectral problem for the operator $-\mathcal{L}$ can be rigorously defined variationally as 
\[
\int_{\SA} \psi_\alpha  \left[\frac{\partial u}{\partial \varphi}
   \frac{\partial v}{\partial \varphi}+\frac{1}{(\cos \varphi)^2} \, \nabla_{\mathbb{S}^{k-1}} u \cdot \nabla_{\mathbb{S}^{k-1}} v\right] d\Ha = \mu \int_{\SA} \psi_\alpha u \, v \, d\Ha
\]
in a suitable weighted energy space $W^{1,2}_\alpha(\SA)$.  For a detailed treatment and complete definitions we refer to \cite[\S 3]{AbFeLu25}.
 In particular we have the following result (see \cite[\S 3.5]{AbFeLu25}).
\begin{proposition}
The spectrum of the operator $-\mathcal{L}$  is made of a non-negative sequence of eigenvalues $\{\mu_j\}_{j\in\mathbb{N}_0}$ of finite multiplicity, diverging to $+\infty$:
\[
0=\mu_0<\mu_1\leq \mu_2 \leq \cdots \leq \mu_j \to +\infty.
\]
Moreover, its eigenfunctions are of class $C^\infty(\SA)$ and they can be chosen in order to form an orthonormal basis for  $L^2_\alpha(\SA)$.
\end{proposition}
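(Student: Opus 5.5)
The plan is the standard variational route: realize $-\mathcal{L}$ as the self-adjoint operator associated with a closed, nonnegative quadratic form on $L^2_\alpha(\SA)$. Showing the form domain embeds compactly into $L^2_\alpha(\SA)$ then makes the resolvent compact, and the spectral theorem for compact self-adjoint operators gives the discrete spectrum and the orthonormal eigenbasis.

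\textbf{The form and its first properties.} Set
\[
a(u,v)=\int_{\SA} \psi_\alpha \Big[\partial_\varphi u\,\partial_\varphi v+\frac{1}{(\cos\varphi)^2}\nabla_{\mathbb{S}^{k-1}} u\cdot \nabla_{\mathbb{S}^{k-1}} v\Big]\, d\Ha
\]
on $W^{1,2}_\alpha(\SA)$, the completion of $C^\infty(\SA)$ under $a(u,u)+\|u\|^2_{L^2_\alpha}$.
\begin{enumerate}
\item Symmetry, nonnegativity and closedness are immediate. Density in $L^2_\alpha(\SA)$ holds because smooth functions are dense.
\item $\mu=0$ is an eigenvalue with constant eigenfunctions.
\item $\mu_0=0$ is simple. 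If $a(u,u)=0$, then $\nabla u=0$ a.e. on the connected manifold $\SA$, so $u$ is constant. Connectedness holds because $\SA$ is a topological sphere $\mathbb{S}^{k}$.
\end{enumerate}

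\textbf{Compactness.} This is the key step. I would relate the form to the Grushin energy in a shell. Take $u$ on $\SA$ and a cutoff $\eta(\rho)$ supported in $1/2<\rho<2$, and set $U(\rho,\theta)=\eta(\rho)u(\theta)$. By \eqref{vol} and \eqref{GrPo}, together with the polar expression of $|\nabla_\alpha U|^2 = U_x^2+|x|^{2\alpha}|\nabla_y U|^2$, we get
\[
\int |\nabla_\alpha U|^2\,dx\,dy \lesssim a(u,u)+\|u\|^2_{L^2_\alpha}.
\]
The underlying identity is that $|\nabla_\alpha U|^2 = \psi_\alpha\big(U_\rho^2+\tfrac{(\alpha+1)^2}{\rho^2}|\nabla_{\text{ang}}U|^2\big)$. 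The compact embedding of the Folland--Stein/Grushin Sobolev space $W^{1,2}_{\nabla_\alpha}$ of a bounded domain into $L^2$ then applies; it holds by H\"ormander's condition, via subelliptic estimates or Rellich--Kondrachov in Carnot--Carath\'eodory spaces (Franchi--Lanconelli). Combined with the weight relation, this yields compactness of $W^{1,2}_\alpha(\SA)\hookrightarrow L^2_\alpha(\SA)$.

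The main obstacle is the weight $\psi_\alpha$, which vanishes on $\Sigma\cap\SA$, i.e. at $\varphi=0,\pi$. There the $L^2_\alpha$ norm is weaker than the unweighted one, so one must check that passing from the shell back to the sphere preserves compactness. Note that $\psi_\alpha\, d\Ha\, \rho^{Q-1}d\rho$ is the correct measure for $\int \psi_\alpha U^2$. I would handle this with Hardy-type integrability: $\psi_\alpha\,d\Ha \sim |\sin\varphi|^{\frac{\alpha}{\alpha+1}}d\varphi\ldots$, which is integrable and controlled. Alternatively, I would prove compactness directly in the one-dimensional variable $\varphi$ by a weighted Rellich argument near the degeneracy.

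\textbf{Conclusion and regularity.} Compactness gives a compact self-adjoint inverse of $I-\mathcal{L}$. Its eigenvalues yield $0=\mu_0<\mu_1\le\cdots\to\infty$, each of finite multiplicity, with eigenfunctions forming an orthonormal basis of $L^2_\alpha(\SA)$. For smoothness, if $-\mathcal{L}u=\mu u$ then $U=\rho^{\gamma}u$ with $\gamma(\gamma+Q-2)=(\alpha+1)^2\mu$ satisfies $\Delta_\alpha U=0$ away from the origin. Hypoellipticity of $\Delta_\alpha$ (H\"ormander) makes $U$ smooth there, hence $u\in C^\infty(\SA)$.
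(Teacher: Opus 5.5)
The paper does not prove this proposition; it quotes it from \cite[\S 3.5]{AbFeLu25}, so your sketch has to stand on its own. Your architecture is the right one: a closed form on $L^2_\alpha(\SA)$, compactness by lifting to a dilation shell, the spectral theorem, and hypoellipticity for smoothness. Your identity $|\nabla_\alpha U|^2=\psi_\alpha\bigl(U_\rho^2+\frac{(\alpha+1)^2}{\rho^2}|\nabla_{\mathrm{ang}}U|^2\bigr)$ is also correct, since $|\nabla_\alpha\rho|^2=\psi_\alpha$ and the cross terms such as $\nabla_\alpha\rho\cdot\nabla_\alpha\varphi$ vanish. However, two steps are not justified as written.

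\textbf{Compactness.} To apply the compact embedding to $U=\eta(\rho)u$ you need a bound on the full norm, and that includes $\|U\|^2_{L^2(dx\,dy)}=c_\eta\int_{\SA}u^2\,d\Ha$. This is the \emph{unweighted} norm of $u$. Since $d\Ha$ has density $\frac{1}{(\alpha+1)^k}|\sin\varphi|^{-\frac{\alpha}{\alpha+1}}|\cos\varphi|^{k-1}$, it is strictly stronger than the $L^2_\alpha$ norm, and it is not controlled by $a(u,u)+\|u\|^2_{L^2_\alpha}$. Your estimate bounds only $\|\nabla_\alpha U\|_{L^2}$, and the remedy you sketch (integrability of $\psi_\alpha\,d\Ha$) does not address this.

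The missing ingredient is the one-dimensional Poincar\'e inequality in $x$. Since $\operatorname{supp}U\subset\{|x|<2\}$, you have $\int U^2\,dx\le C\int U_x^2\,dx$ for each $y$, hence $\|U\|_{L^2}\le C\|\nabla_\alpha U\|_{L^2}$. Equivalently, you get the Hardy-type bound $\int_{\SA}u^2\,d\Ha\lesssim a(u,u)+\|u\|^2_{L^2_\alpha}$.

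With this in hand, the obstacle you name disappears; it actually sits in the opposite direction. $L^2(dx\,dy)$-convergence of $\eta u_j$ is $L^2(\SA,d\Ha)$-convergence of $u_j$, and this dominates $L^2_\alpha$-convergence because $\psi_\alpha\le 1$.

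You need the same bound in two other places:
\begin{enumerate}
\item Closability of the form, which is not automatic for a completion. If $u_m\to 0$ in $L^2_\alpha$ and $(u_m)$ is form-Cauchy, then $\eta u_m$ converges in $W^{1,2}_{\nabla_\alpha}$ to a function vanishing a.e., so $a(u_m,u_m)\to 0$.
\item The regularity step. You need $\rho^\gamma u\in W^{1,2}_{\nabla_\alpha,\mathrm{loc}}(\R^N\setminus\{0\})$, so that hypoellipticity is applied to a genuine distributional solution of $\Delta_\alpha U=0$.
\end{enumerate}

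\textbf{Simplicity of $\mu_0$.} Connectedness of $\SA$ is not enough here. The weight $\psi_\alpha$ vanishes exactly on $\Sigma\cap\SA$: two points if $k=1$, the equator $\{\varphi=0\}$ if $k\ge 2$. This set disconnects $\SA$ into $\{x>0\}$ and $\{x<0\}$. From $a(u,u)=0$ you only get that $u$ is constant on each of the two pieces, and you still have to exclude a jump across $\varphi=0$.

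This uses the exponent of the weight. Near $\varphi=0$ the energy measure is $\psi_\alpha\,d\Ha\simeq|\varphi|^{\frac{\alpha}{\alpha+1}}\,d\varphi\,d\mathcal H^{k-1}$, and $\frac{\alpha}{\alpha+1}<1$. Cauchy--Schwarz against the integrable weight $|\varphi|^{-\frac{\alpha}{\alpha+1}}$ then shows that form-domain functions are absolutely continuous (indeed H\"older) along $\varphi$-lines across the equator, so the two constants coincide.

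If that exponent were $\ge 1$, the indicator of $\{x>0\}$ would have zero energy and $0$ would be a double eigenvalue. So this is a genuine step that your argument must include, not a formality.
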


Now, by \eqref{GrPo}, a smooth $\rho$-homogeneous function 
\[
u(\rho, \varphi,\eta) :=  \rho^ng(\varphi,\eta), \quad n \in \mathbb{N}_0
\] 
with $\rho>0$, $\varphi$ as in \eqref{phidef}, $\eta \in \mathbb{S}^{k-1}$ and with the usual convention that $\eta$ must be omitted when $k=1$,
is Grushin-harmonic, i.e., solves  
\[
\Delta_\alpha u= 0,
\]
if and only if
\[
n(n-1)\rho^{n-2}g+(Q-1)n\rho^{n-2}g+(\alpha+1)^2\rho^{n-2}\mathcal{L} g=0.
\] 
This implies 
\begin{equation}\label{eq:eiga}
-\mathcal{L} g  = \frac{n(Q+n-2)}{(\alpha+1)^2}g.
\end{equation}
The homogeneous ansatz then suggests that the eigenvalues of the spherical operator 
$-\mathcal{L}$ have the form 
\[
\frac{n(Q+n-2)}{(\alpha+1)^2}
\]
for some $n \in \mathbb{N}_0$. Our aim is then to find for which $n$ such a number is indeed an eigenvalue and to compute the corresponding eigenfunctions.

\section{The case $k=1$}

Since the  spherical operator and the polar coordinates have different forms for $k=1$ and $k \geq 2$, we separate these two cases.
We first consider   $k=1$. We recall that  in this case the polar coordinates are $\rho \in (0,+\infty) ,\varphi \in (0,2\pi)$ and for a sufficiently smooth function $u$ the Grushin operator  is 
\[
\Delta_\alpha u = \psi_\alpha\left(\frac{\partial^2 u }{\partial \rho^2}+\frac{\alpha+1}{\rho}\frac{\partial u }{\partial \rho}+\frac{(\alpha+1)^2}{\rho^2}\mathcal{L}u \right)\,,
\]
where 
\[
\mathcal{L} =  \frac{\partial^2 }{\partial\varphi^2}+ \frac{\alpha}{\alpha+1}\frac{\cos\varphi}{\sin\varphi}\frac{\partial  }{\partial \varphi}
 .
\]
Then \eqref{eq:eiga} reduces to
\begin{equation}\label{eq:eiga1}
-\mathcal{L} g  = \frac{n(n+\alpha)}{(\alpha+1)^2}g,
\end{equation}
that is 
\begin{equation}\label{eqG1alfa}
 \frac{\partial^2 g}{\partial\varphi^2}+ \frac{\alpha}{\alpha+1}\frac{\cos\varphi}{\sin\varphi}\frac{\partial  g}{\partial \varphi}+\frac{n(n+\alpha)}{(\alpha+1)^2}g=0.
\end{equation}
We now try to find, if possible, solutions to \eqref{eqG1alfa}, following the approach by Garofalo and Shen \cite{GaSh94}. With the  change of  variable $\tau = \cos\varphi$, $h(\tau) = g(\varphi)$ one gets
\[
\frac{\partial g}{\partial \varphi} =-\sin\varphi \frac{\partial h}{\partial \tau}, \qquad 
\frac{\partial^2 g}{\partial \varphi^2}  = \sin^2\varphi \frac{\partial^2 h}{\partial \tau^2}-\cos\varphi\frac{\partial h}{\partial \tau}.
\]
Hence, easy computations show that equation \eqref{eqG1alfa} can be rewritten as
\[
(1-\tau^2)\frac{\partial^2 h}{\partial \tau^2}-\tau\left(\frac{\alpha}{\alpha+1}+1\right) \frac{\partial h}{\partial \tau}+
\left(\frac{n}{\alpha+1}\right)\left(\frac{n}{\alpha+1}+\frac{\alpha}{\alpha+1}\right)h=0.
\]
The above equation is a Jacobi equation of the form
\[
(1-x^2)y''-x(2\lambda+1)y'+N(N+2\lambda)y=0, \qquad \lambda>-\frac12,\, N\in\mathbb{N},
\]
whenever $\frac{n}{\alpha+1} \in \mathbb{N}$, that is, $n \equiv 0 \pmod{\alpha+1}$. In this case, a solution is given by the Gegenbauer polynomial $
C^{\frac{\alpha}{2(\alpha+1)}}_{\frac{n}{\alpha+1}}(\tau)$ (for a reference, see, e.g. \cite[p. 175, eq. (20)]{Er81}). Consequently, a solution of \eqref{eqG1alfa} is given by 
\[
C^{\frac{\alpha}{2(\alpha+1)}}_{\frac{n}{\alpha+1}}(\cos\varphi).
\]
We now also want to construct a solution for the case $n \equiv 1\pmod{\alpha+1}$. In order to do so, we look for a solution to \eqref{eqG1alfa}
in the form
\[
g(\varphi) = \sin\varphi\left|\sin\varphi\right|^{-\frac{\alpha}{\alpha+1}}f(\varphi).
\]
We do so for the following reason: if we want a polynomial solution that is homogeneous of degree $n\equiv 1\pmod{\alpha+1}$, with respect to the natural dilation $\delta_\lambda(x,y) := (\lambda x , \lambda^{\alpha+1}y)$, such a polynomial must surely contain $x$ as a factor, i.e., it must be of the form
$
xP(x,y),
$
with $P(x,y)$ a  polynomial  that is $\delta_\lambda$-homogeneous of degree $n-1$. Note that $x = \rho \sin\varphi\left|\sin\varphi\right|^{-\frac{\alpha}{\alpha+1}}$.

After some computations, we find that $f$ solves the equation 
\[
(1-\tau^2)\frac{\partial^2 f}{\partial \tau^2}-\tau\left(\frac{\alpha+2}{\alpha+1}+1\right) \frac{\partial f}{\partial \tau}+
\left(\frac{n-1}{\alpha+1}\right)\left(\frac{n-1}{\alpha+1}+\frac{\alpha+2}{\alpha+1}\right)f=0.
\]
The above equation is again a Jacobi equation whenever $n \equiv 1 \pmod{\alpha+1}$, and in this case a  solution  is given by
\[
C^{\frac{\alpha+2}{2(\alpha+1)}}_{\frac{n-1}{\alpha+1}}(\tau).
\]
Therefore, a solution of \eqref{eqG1alfa} when  $n \equiv 1 \pmod{\alpha+1}$ is given by 
\[
\sin\varphi\left|\sin\varphi\right|^{-\frac{\alpha}{\alpha+1}}C^{\frac{\alpha+2}{2(\alpha+1)}}_{\frac{n-1}{\alpha+1}}(\cos\varphi).
\]
We have then constructed a family of solutions to \eqref{eqG1alfa} for $n \equiv 0,1\pmod{\alpha+1}$. We set 
\begin{equation}\label{autosp_1}
\mathscr{H}_n := \begin{cases}
\mathrm{span} \left\{C^{\frac{\alpha}{2(\alpha+1)}}_{\frac{n}{\alpha+1}}(\cos\varphi) \right\} \quad &\mbox{ if } n \equiv 0\pmod{\alpha+1}\\
\mathrm{span} \left\{ \sin\varphi\left|\sin\varphi\right|^{-\frac{\alpha}{\alpha+1}}C^{\frac{\alpha+2}{2(\alpha+1)}}_{\frac{n-1}{\alpha+1}}(\cos\varphi) \right\} \quad &\mbox{ if } n \equiv 1 \pmod{\alpha+1},
\end{cases}
\end{equation}

Note that in this case, under the parametrization in the variable $\varphi$ of $\SA$, we have the identification
\[
L_\alpha^2(\SA) = L^2\left((0,2\pi),\frac{1}{\alpha+1}|\sin \varphi|^{\frac{\alpha}{\alpha +1}}  \, d\varphi\right).
\]
Indeed, it is easily seen that on $\SA$
\[
 d\mathcal{H}_\alpha^{N-1} = \frac{1}{\alpha+1}|\sin \varphi|^{-\frac{\alpha}{\alpha+1}}
  \, d\varphi,\qquad  \psi_\alpha = |\sin \varphi|^{\frac{2\alpha}{\alpha+1}}.
 \]
We have the following theorem.
\begin{theorem}\label{ortk1}
Let $k=1$ and let $\mathscr{H}_n$ be as in \eqref{autosp_1}. The following orthogonal decomposition holds:
\[
L^2\left((0,2\pi),\frac{1}{\alpha+1}|\sin \varphi|^{\frac{\alpha}{\alpha +1}} 
  \, d\varphi\right) = \bigoplus_{\substack{n\in \mathbb{N}_0 \\ n \equiv 0,1 \!\!\!\!\pmod{\alpha+1}}}\mathscr{H}_n.
\]
\end{theorem}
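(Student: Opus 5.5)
The proof has two parts: orthogonality of the spaces $\mathscr{H}_n$, and completeness of their union. Throughout I write $w(\varphi):=\frac{1}{\alpha+1}|\sin\varphi|^{\frac{\alpha}{\alpha+1}}$ for the weight.

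For orthogonality I split by parity under the reflection $\varphi\mapsto 2\pi-\varphi$, which sends $\sin\varphi\mapsto-\sin\varphi$ and fixes $\cos\varphi$. This reflection preserves $w$. The functions in $\mathscr{H}_n$ with $n\equiv 0$ are even, and those with $n\equiv 1$ are odd, so any even function is orthogonal to any odd one.

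Within one parity class I substitute $\tau=\cos\varphi$ on each of the two halves $(0,\pi)$ and $(\pi,2\pi)$. On each half $d\varphi = d\tau/\sqrt{1-\tau^2}$ up to sign.
\begin{itemize}
\item In the even class the weight becomes $(1-\tau^2)^{\frac{\alpha}{2(\alpha+1)}-\frac12}$. This is exactly the Gegenbauer weight for $\lambda=\frac{\alpha}{2(\alpha+1)}$, so classical Gegenbauer orthogonality gives $\langle C^\lambda_m(\cos\varphi),C^\lambda_{m'}(\cos\varphi)\rangle=0$ for $m\neq m'$.
\item In the odd class the product of the prefactors is $|\sin\varphi|^{2-\frac{2\alpha}{\alpha+1}}$. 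Combined with $w$ and $d\varphi$ this gives $(1-\tau^2)^{\frac{1}{\alpha+1}+\frac{\alpha}{2(\alpha+1)}-\frac12}$. The exponent equals $\lambda'-\frac12$ with $\lambda'=\frac{\alpha+2}{2(\alpha+1)}$, since $\frac{2+\alpha}{2(\alpha+1)}=\frac{1}{\alpha+1}+\frac{\alpha}{2(\alpha+1)}$. So Gegenbauer orthogonality again applies.
\end{itemize}
Alternatively, orthogonality follows from self-adjointness of $-\mathcal L$: the eigenvalues $\frac{n(n+\alpha)}{(\alpha+1)^2}$ are strictly increasing in $n$, hence distinct. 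I would use the explicit computation, since it needs no domain discussion.

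For completeness, let $u\in L^2((0,2\pi),w\,d\varphi)$ be orthogonal to every $\mathscr{H}_n$, and write $u=u_e+u_o$ with $u_e$ even and $u_o$ odd under the reflection. By the parity splitting, $u_e$ is orthogonal to all even generators and $u_o$ to all odd ones.
\begin{itemize}
\item \emph{Even part.} $u_e$ is determined by its values on $(0,\pi)$, and there it is a function $a(\tau)$ of $\tau=\cos\varphi$, since $\varphi\mapsto\cos\varphi$ is a bijection $(0,\pi)\to(-1,1)$. The condition $u_e\in L^2$ means $a\in L^2((-1,1),(1-\tau^2)^{\lambda-1/2}d\tau)$. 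The Gegenbauer polynomials $\{C^\lambda_m\}_m$ are complete in this space (polynomials are dense in $L^2$ of a finite measure on a compact interval, by Weierstrass). Hence $a=0$.
\item \emph{Odd part.} I write $u_o=\sin\varphi|\sin\varphi|^{-\frac{\alpha}{\alpha+1}}\,b(\cos\varphi)$ on $(0,\pi)$. This is legitimate because the prefactor is positive on $(0,\pi)$, and on $(\pi,2\pi)$ the formula extends by oddness. The computation above shows that $u_o\in L^2$ if and only if $b\in L^2((1-\tau^2)^{\lambda'-1/2}d\tau)$, and that orthogonality of $u_o$ to the odd generators is orthogonality of $b$ to $\{C^{\lambda'}_m\}$. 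Completeness of the Gegenbauer polynomials in this weighted space gives $b=0$.
\end{itemize}
Hence $u=0$, which proves the decomposition.

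The main obstacle is the bookkeeping in the odd case: showing that the map $u_o\mapsto b$ is an isometry (up to a constant) onto the full weighted $L^2$ space for $\lambda'$, and not onto a proper subspace. The step to justify is that every odd $L^2$ function can be written in this factored form, which holds because the prefactor never vanishes on the open intervals $(0,\pi)$ and $(\pi,2\pi)$. The remaining check is that the exponents match exactly, which I verified above.
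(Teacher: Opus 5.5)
Your proposal is correct and follows essentially the same route as the paper. Orthogonality comes from the parity under $\varphi\mapsto 2\pi-\varphi$ together with Gegenbauer orthogonality after $\tau=\cos\varphi$ (parameters $\frac{\alpha}{2(\alpha+1)}$ and $\frac{\alpha+2}{2(\alpha+1)}$), and completeness comes from splitting $u=u_e+u_o$ and applying Gegenbauer completeness to $u_e$ and to $u_o/(\sin\varphi)^{\frac{1}{\alpha+1}}$ on $(0,\pi)$, which is exactly your $b(\cos\varphi)$. The surjectivity of $u_o\mapsto b$ that you flag as the main obstacle is not actually needed: it suffices that $b$ lies in the weighted space, and your norm identity already gives this.
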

\begin{proof}
We first observe that the spaces $\mathscr{H}_n $, where $n\equiv 0,1 \pmod{\alpha+1}$, are mutually orthogonal. Indeed, we recall that for $\lambda > -\frac12 $ the Gegenbauer polynomials $C^\lambda_m(x)$
are a complete set of orthogonal polynomials in $L^2\left((-1,1),(1-x^2)^{\lambda-\frac12} \right)$. If $n_1 \equiv 0$, $n_2 \equiv 0\pmod{\alpha+1}$, $n_1 \neq n_2$, the spaces $\mathscr{H}_{n_1} $ and $\mathscr{H}_{n_2}$ are orthogonal, since
\begin{align*}
\int_{0}^{2\pi}&C^{\frac{\alpha}{2(\alpha+1)}}_{\frac{n_1}{\alpha+1}}(\cos\varphi) C^{\frac{\alpha}{2(\alpha+1)}}_{\frac{n_2}{\alpha+1}}(\cos\varphi)  |\sin \varphi|^{\frac{\alpha}{\alpha +1}}d\varphi\\
& =2 \int_{0}^{\pi}C^{\frac{\alpha}{2(\alpha+1)}}_{\frac{n_1}{\alpha+1}}(\cos\varphi) C^{\frac{\alpha}{2(\alpha+1)}}_{\frac{n_2}{\alpha+1}}(\cos\varphi)  |\sin \varphi|^{\frac{\alpha}{\alpha +1}}d\varphi  \\
&= 2 \int_{-1}^{1}C^{\frac{\alpha}{2(\alpha+1)}}_{\frac{n_1}{\alpha+1}}(\tau)  C^{\frac{\alpha}{2(\alpha+1)}}_{\frac{n_2}{\alpha+1}}(\tau)  (1-\tau^2)^{-\frac{1}{2(\alpha+1)}}d\tau =0.
\end{align*}

 If $n_1 \equiv  1$, $n_2 \equiv 1\pmod{\alpha+1}$, $n_1 \neq n_2$, the spaces $\mathscr{H}_{n_1} $ and $\mathscr{H}_{n_2}$ are orthogonal, since
\begin{align*}
\int_{0}^{2\pi}&C^{\frac{\alpha+2}{2(\alpha+1)}}_{\frac{n_1-1}{\alpha+1}}(\cos\varphi) C^{\frac{\alpha+2}{2(\alpha+1)}}_{\frac{n_2-1}{\alpha+1}}(\cos\varphi) |\sin \varphi|^{\frac{2+\alpha}{\alpha+1}}  d\varphi\\
& =2 \int_{0}^{\pi}C^{\frac{\alpha+2}{2(\alpha+1)}}_{\frac{n_1-1}{\alpha+1}}(\cos\varphi) C^{\frac{\alpha+2}{2(\alpha+1)}}_{\frac{n_2-1}{\alpha+1}}(\cos\varphi) |\sin \varphi|^{\frac{2+\alpha}{\alpha+1}} d\varphi  \\
&= 2 \int_{-1}^{1}C^{\frac{\alpha+2}{2(\alpha+1)}}_{\frac{n_1-1}{\alpha+1}}(\tau)  C^{\frac{\alpha+2}{2(\alpha+1)}}_{\frac{n_2-1}{\alpha+1}}(\tau)  (1-\tau^2)^{\frac1{2(\alpha+1)}}d\tau =0.
\end{align*}

Finally, if $n_1 \equiv  0$, $n_2 \equiv1 \pmod{\alpha+1}$, the spaces $\mathscr{H}_{n_1} $ and $\mathscr{H}_{n_2}$ are orthogonal, since
\begin{align*}
\int_{0}^{2\pi}&C^{\frac{\alpha}{2(\alpha+1)}}_{\frac{n_1}{\alpha+1}}(\cos\varphi) C^{\frac{\alpha+2}{2(\alpha+1)}}_{\frac{n_2-1}{\alpha+1}}(\cos\varphi)  \sin \varphi  d\varphi  =0,
\end{align*}
since the integrand is odd.

Now, let $f \in L^2\left((0,2\pi),\frac{1}{\alpha+1}|\sin \varphi|^{\frac{\alpha}{\alpha +1}} 
  \, d\varphi\right) $.  We want to show that if $f  $  is orthogonal to every $\mathscr{H}_n$ with $n\equiv0,1 \pmod{\alpha+1}$, then $f=0$.  First we decompose $f$ in its even and odd part (with respect to the center of the interval, i.e., $\pi$)
  \[
  f= f_e+ f_o,
  \]
  where
    \[
  f_e(\varphi)= \frac{f(\varphi)+f(2\pi-\varphi)}{2}, \quad   f_o(\varphi)= \frac{f(\varphi)-f(2\pi-\varphi)}{2}.
  \]
First suppose that $f$ is orthogonal to every $\mathscr H_n$ with  $n\equiv 0 \pmod{\alpha+1}$, that is for every $m \in \mathbb{N}_0$ we have 
\begin{align*}
0&=\int_{0}^{2\pi}f(\varphi)  C^{\frac{\alpha}{2(\alpha+1)}}_{m}(\cos\varphi) |\sin \varphi|^{\frac{\alpha}{\alpha +1}}  d\varphi\\
&=\int_{0}^{2\pi}(f_e(\varphi)+f_o(\varphi))  C^{\frac{\alpha}{2(\alpha+1)}}_{m}(\cos\varphi) |\sin \varphi|^{\frac{\alpha}{\alpha +1}} d\varphi\\
&=2\int_{0}^{\pi}f_e(\varphi)C^{\frac{\alpha}{2(\alpha+1)}}_{m}(\cos\varphi) |\sin \varphi|^{\frac{\alpha}{\alpha +1}} d\varphi. 
\end{align*}
This implies that $f_e=0$ by the completeness of the Gegenbauer polynomials $\left\{C^{\frac{\alpha}{2(\alpha+1)}}_{m}\right\}_{m \in \mathbb{N}_0}$ in the space $L^2\left((-1,1),(1-x^2)^{-\frac{1}{2(\alpha+1)}} \right)$. Consequently, $f=f_o$. 

 Then, suppose that $f = f_o$ is orthogonal to every $\mathscr H_n$ with  $n\equiv 1\pmod{\alpha+1}$, that is, for every $m \in \mathbb{N}_0$ we have 
 \begin{align*}
0&=\int_{0}^{2\pi}f_o(\varphi) C^{\frac{\alpha+2}{2(\alpha+1)}}_{m}(\cos\varphi)  \sin \varphi d\varphi\\
&=2\int_{0}^{\pi}f_o(\varphi)C^{\frac{\alpha+2}{2(\alpha+1)}}_{m}(\cos\varphi)  \sin \varphi d\varphi\\
&=2\int_{0}^{\pi}\frac{f_o(\varphi)}{(\sin\varphi)^{\frac1{\alpha+1}}} C^{\frac{\alpha+2}{2(\alpha+1)}}_{m}(\cos\varphi)  (\sin \varphi)^\frac{\alpha+2}{\alpha+1} d\varphi,
\end{align*}
    that implies $\frac{f_o(\varphi)}{(\sin\varphi)^{\frac1{\alpha+1}}} =0 $, by the completeness of Gegenbauer polynomials $\left\{C^{\frac{\alpha+2}{2(\alpha+1)}}_{m}\right\}_{m \in \mathbb{N}_0}$ in the space $L^2\left((-1,1),(1-x^2)^{\frac{1}{2(\alpha+1)}} \right)$. Hence, $f_o=0$. Therefore $f=0$. 
\end{proof}

From the previous Theorem \ref{ortk1} and equation \eqref{eq:eiga1} together with the subsequent computations, we can immediately deduce the following.

\begin{theorem}
Let $k=1$. The values taken by the eigenvalues $\{\mu_j\}_{j \in \mathbb{N}_0}$ of the spherical operator $-\mathcal{L}$ are given by 
\[
\{\mu_j:j \in \mathbb{N}_0 \}= \left\{{\frac{n(n+\alpha)}{(\alpha+1)^2}}: n \in \mathbb{N}_0, n \equiv 0,1 \pmod{\alpha+1}\right\}. 
\]
For each admissible $n$, the corresponding eigenspace is $\mathscr{H}_n$ as in \eqref{autosp_1}, and the eigenvalue is simple. Equivalently, the eigenvalues are given by the two disjoint families
\[
\{\mu_j:j \in \mathbb{N}_0\}=  \left\{i\left(i+\frac{\alpha}{\alpha+1}\right) :i \in \mathbb{N}_0\right\}  \cup \,
\left\{(i+1)\left(i+1-\frac{\alpha}{\alpha+1}\right):i \in \mathbb{N}_0   \right\} .  
\]
\end{theorem}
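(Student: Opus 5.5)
The plan is to read the spectrum off Theorem~\ref{ortk1}. The first step is to check that each generator of $\mathscr{H}_n$ is a genuine eigenfunction of $-\mathcal{L}$ in the variational sense, with eigenvalue $\frac{n(n+\alpha)}{(\alpha+1)^2}$. The pointwise ODE \eqref{eqG1alfa} already holds away from $\sin\varphi=0$ by the computations preceding \eqref{autosp_1}. The cleanest way to pass to the weak formulation is to note that $\rho^n g$ is a $\delta_\lambda$-homogeneous polynomial in $(x,y)$. For $n\equiv 0$ this polynomial is $\rho^n C_m(\cos\varphi)$ with $\cos\varphi=(\alpha+1)y/\rho^{\alpha+1}$; since $C_m$ has parity $m$, only powers $y^{m-2j}\rho^{2j(\alpha+1)}$ occur, and $\rho^{2(\alpha+1)}=x^{2(\alpha+1)}+(\alpha+1)^2y^2$. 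For $n\equiv1$ the polynomial is $x$ times such a polynomial. Such a polynomial is Grushin-harmonic by \eqref{GrPo}. Its restriction to $\SA$ is smooth, so it lies in $W^{1,2}_\alpha(\SA)$. Integrating by parts against the weight $\psi_\alpha\,d\Ha\propto|\sin\varphi|^{\alpha/(\alpha+1)}d\varphi$ then produces no boundary terms at $\varphi=0,\pi$, because the weight vanishes there and $g'$ is bounded. Hence $g$ solves the weak eigenvalue problem.

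Next, I use Theorem~\ref{ortk1}. The functions spanning the $\mathscr{H}_n$, after normalization, form an orthonormal basis of $L^2_\alpha(\SA)$ consisting of eigenfunctions of the self-adjoint operator $-\mathcal{L}$. A standard argument then shows there are no other eigenvalues. If $-\mathcal{L}u=\mu u$ with $u\neq0$ and $\mu\notin\{\frac{n(n+\alpha)}{(\alpha+1)^2}\}$, symmetry of the bilinear form gives that $u$ is orthogonal to every $\mathscr{H}_n$. By completeness this forces $u=0$, a contradiction. The same reasoning shows that the eigenspace for a value $\mu$ is exactly the direct sum of those $\mathscr{H}_n$ with $\frac{n(n+\alpha)}{(\alpha+1)^2}=\mu$.

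For simplicity of each eigenvalue, I must show that distinct admissible $n$ yield distinct values. The map $n\mapsto n(n+\alpha)$ is strictly increasing on $\mathbb{N}_0$, so distinct $n$ give distinct eigenvalues. Each $\mathscr{H}_n$ is one-dimensional, so every eigenvalue is simple.

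Finally, I reparametrize to obtain the two families. For $n=(\alpha+1)i$ the eigenvalue is $i\bigl(i+\frac{\alpha}{\alpha+1}\bigr)$. For $n=(\alpha+1)i+1$ it is $\frac{((\alpha+1)i+1)((\alpha+1)(i+1))}{(\alpha+1)^2}=(i+1)\bigl(i+\frac{1}{\alpha+1}\bigr)=(i+1)\bigl(i+1-\frac{\alpha}{\alpha+1}\bigr)$. Disjointness of the two families follows from the injectivity already shown.

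The only delicate point is the first step, namely checking that the explicitly constructed functions, in particular the odd family with the factor $|\sin\varphi|^{-\alpha/(\alpha+1)}$, belong to the energy space and satisfy the weak equation across $\varphi=0,\pi$. I handle this through the polynomial identification above rather than by direct estimates.
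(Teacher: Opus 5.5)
Your overall structure is right and matches the paper's, which simply reads the spectrum off the construction of $\mathscr{H}_n$ and Theorem~\ref{ortk1} without discussing the weak formulation. Your argument excluding further eigenvalues, via symmetry of the form and completeness, is correct. Your use of the strict monotonicity of $n\mapsto n(n+\alpha)$ on $\mathbb{N}_0$ gives simplicity and disjointness of the two families at once. This is cleaner than the paper's algebraic contradiction.

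The flaw is in the step you yourself flag as delicate. The claim that the weight vanishes and $g'$ is bounded at $\sin\varphi=0$ is false for the odd family.

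\begin{itemize}
\item There $g(\varphi)=\mathrm{sgn}(\sin\varphi)\,|\sin\varphi|^{\frac{1}{\alpha+1}}C_m(\cos\varphi)$, with $C_m=C^{\frac{\alpha+2}{2(\alpha+1)}}_m$. For $\sin\varphi\neq0$ a direct computation gives the exact identity $|\sin\varphi|^{\frac{\alpha}{\alpha+1}}g'(\varphi)=\frac{1}{\alpha+1}\cos\varphi\,C_m(\cos\varphi)-\sin^2\varphi\,C_m'(\cos\varphi)$.
\item So $g'$ blows up like $|\sin\varphi|^{-\frac{\alpha}{\alpha+1}}$. The weighted flux tends to $\frac{1}{\alpha+1}C_m(1)\neq0$ at $\varphi=0,2\pi$ and to $-\frac{1}{\alpha+1}C_m(-1)\neq0$ at $\varphi=\pi$. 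Hence the one-sided boundary terms do not vanish.
\item Your polynomial identification does show that $g$ is smooth on the curve $\SA$. That does not make $\partial_\varphi g$ bounded, because $\varphi\mapsto\Phi(1,\varphi)$ is not a smooth chart where $\sin\varphi=0$. Already $g=x$ restricted to $\SA$ has an unbounded $\varphi$-derivative.
\end{itemize}

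The repair is short. Integrate by parts separately on $(0,\pi)$ and $(\pi,2\pi)$. The right-hand side of the identity above is smooth and $2\pi$-periodic, so the flux has equal one-sided limits at $\varphi=\pi$ and at $\varphi=0\equiv2\pi$. Any finite-energy test function $v$ is continuous across these points, since locally $\int|\partial_\varphi v|\,d\varphi\le\bigl(\int|\sin\varphi|^{\frac{\alpha}{\alpha+1}}|\partial_\varphi v|^2\,d\varphi\bigr)^{1/2}\bigl(\int|\sin\varphi|^{-\frac{\alpha}{\alpha+1}}\,d\varphi\bigr)^{1/2}<\infty$. Hence the boundary contributions cancel in pairs rather than vanishing individually.

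Equivalently, use $x$ as a local coordinate on $\SA$ near $(0,\pm\frac{1}{\alpha+1})$. There the energy density is a smooth positive multiple of $|\partial_x g|^2\,dx$, so nothing degenerates, and a closed curve has no boundary.

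For the even family your argument works as written, since $g'=-\sin\varphi\,C_m'(\cos\varphi)$ vanishes at $\sin\varphi=0$.
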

\begin{proof}
The first set equality and the statement about the eigenspaces and multiplicities follow from the previous computations and Theorem \ref{ortk1}. For what concerns the second set equality, if $n\equiv 0\pmod{\alpha+1}$, then $n=i(\alpha+1)$ for some $i\in \mathbb{N}_0$. Then
\[
\frac{n(n+\alpha)}{(\alpha+1)^2}= \frac{i(\alpha+1)(i(\alpha+1)+\alpha)}{(\alpha+1)^2} = i\left(i+\frac{\alpha}{\alpha+1}\right).
\]
If $n\equiv 1\pmod{\alpha+1}$, then $n=i(\alpha+1)+1$ for some $i\in \mathbb{N}_0$. Then
\begin{align*}
\frac{n(n+\alpha)}{(\alpha+1)^2}&= \frac{(i(\alpha+1)+1)(i(\alpha+1)+1+\alpha)}{(\alpha+1)^2} \\
&= \frac{(i(\alpha+1)+1)(i+1)(\alpha+1)}{(\alpha+1)^2}\\
&=(i+1)\left(i+1-\frac{\alpha}{\alpha+1}\right).
\end{align*}
Finally, we prove that the two families are disjoint. Suppose by contradiction that for some $i_1,i_2 \in \mathbb{N}_0$
\[
i_1\left(i_1+\frac{\alpha}{\alpha+1}\right) = (i_2+1)\left(i_2+1-\frac{\alpha}{\alpha+1}\right).
\]
This yields 
\[
\frac{\alpha}{\alpha+1} = i_2-i_1+1,
\]
which is impossible.
\end{proof}


\section{The case  $k \geq 2$}
Let us now consider the case $k\geq2$. In this case, the polar coordinates are $\rho\in(0,+\infty)$,  $\eta \in \mathbb{S}^{k-1}$, and $\varphi\in\left(-\frac{\pi}{2},\frac{\pi}{2}\right)$. We recall that the spherical part of the  Grushin Laplacian in polar coordinates (see \eqref{GrPo}) is given by
\[
\mathcal{L} =\frac{\partial^2}{\partial\varphi^2}+\left(\frac{\alpha}{\alpha+1}\frac{\cos\varphi}{\sin\varphi}-(k-1)\frac{\sin\varphi}{\cos\varphi}\right)\frac{\partial}{\partial\varphi}+\frac{1}{\cos^2\varphi}\Delta_{\mathbb{S}^{k-1}}.
\]
 As in the previous section, we are searching for solutions to \eqref{eq:eiga}, that is
\begin{equation}\label{lambda_sigma}
-\mathcal{L} g= \frac{n(n+Q-2)}{(\alpha+1)^2}g.
\end{equation}
We make the ansatz that a solution can be written as a product of a function in the variable $\varphi$ and a spherical harmonic on $\mathbb{S}^{k-1}$. Therefore, let $g(\varphi,\eta)=h(\varphi)Y_\ell(\eta)$, where $Y_\ell(\eta)$ is a spherical harmonic of degree $\ell$ on the sphere $\mathbb{S}^{k-1}$. Since $\Delta_\eta Y_\ell=-\ell(\ell+k-2)Y_\ell$, equation \eqref{lambda_sigma} gives
\[
h''(\varphi) +\left(\frac{\alpha}{\alpha+1}\frac{\cos\varphi}{\sin\varphi}-(k-1)\frac{\sin\varphi}{\cos\varphi}\right)h'(\varphi) +\left(\frac{n(n+Q-2)}{(\alpha+1)^2}-\frac{\ell(\ell+k-2)}{\cos^2\varphi}\right)h(\varphi)=0.
\]
Again with the change of variable $\tau=\cos\varphi$ the equation becomes
\[
(1-\tau^2)u''+\tau\left(-1-\frac{\alpha}{\alpha+1}+(k-1)\frac{1-\tau^2}{\tau^2}\right)u'+\left(\frac{n(n+Q-2)}{(\alpha+1)^2}-\frac{\ell(\ell+k-2)}{\tau^2}\right)u=0,
\]
or equivalently
\begin{equation}\label{hyp:eq}
\tau^2(1-\tau^2)u''+\tau\left(\left(1-k-\frac{2\alpha+1}{\alpha+1}\right)\tau^2+k-1\right)u'-\left(\ell(\ell+k-2)-\tau^2\frac{n(n+Q-2)}{(\alpha+1)^2}\right)u=0. 
\end{equation}
It can be checked that a solution to \eqref{hyp:eq} is given by
\[
u(\tau)= \tau^\ell {_2F_1}\left(\frac{\ell(\alpha+1)-n}{2(\alpha+1)}, \frac{\ell(\alpha+1)+n+Q-2}{2(\alpha+1)}; \ell+\frac{k}{2};\tau^2\right), 
\]
where $_2F_1$ is the Gauss hypergeometric function (see, e.g. \cite[\S 2.1]{Er81}). It is well known that if the first parameter of $_2F_1$ is a nonpositive integer, we obtain a Jacobi polynomial (see \cite[\S 4.21]{Sz75}). In our case, if we set
\[
\widetilde{k}^1_{n,\ell}:=\frac{n-\ell(\alpha+1)}{2(\alpha+1)}
\]
and we assume that $\ell$ is such that
\[
\widetilde{k}^1_{n,\ell}\in\mathbb{N}_0,
\]
we obtain
\[
u(\tau)=\tau^\ell {_2F_1}\left(-\widetilde{k}^1_{n,\ell}, p+\widetilde{k}^1_{n,\ell},\gamma, \tau^2\right)=\tau^\ell\frac{\widetilde{k}^1_{n,\ell}!}{(\gamma)_{\widetilde{k}^1_{n,\ell}}}(-1)^{\widetilde{k}^1_{n,\ell}} P_{\widetilde{k}^1_{n,\ell}}^{(p-\gamma,\gamma-1)}(2\tau^2-1), 
\]
where $\gamma=\ell+\frac{k}{2}$, $p=\gamma-\frac{1}{2(\alpha+1)}$, $(\cdot)_{\widetilde{k}^1_{n,\ell}}$ is the Pochhammer symbol and $P_n^{(\alpha,\beta)}$ is a Jacobi polynomial (see \cite[Ch 4]{Sz75}). Therefore
\[
h(\varphi)=\frac{\widetilde{k}^1_{n,\ell}!}{(\gamma)_{\widetilde{k}^1_{n,\ell}}}(-1)^{\widetilde{k}^1_{n,\ell}}(\cos\varphi)^\ell P_{\widetilde{k}^1_{n,\ell}}^{\left(-\frac{1}{2(\alpha+1)},\ell+\frac{k}{2}-1\right)}(\cos2\varphi)
\]
and 
\begin{equation}\label{gphiP}
g(\varphi,\eta)=\frac{\widetilde{k}^1_{n,\ell}!}{(\gamma)_{\widetilde{k}^1_{n,\ell}}}(-1)^{\widetilde{k}^1_{n,\ell}}(\cos\varphi)^\ell P_{\widetilde{k}^1_{n,\ell}}^{\left(-\frac{1}{2(\alpha+1)},\ell+\frac{k}{2}-1\right)}(\cos2\varphi)Y_\ell(\eta).
\end{equation}
Observe that if $\ell$ is even, $\widetilde{k}^1_{n,\ell}=\frac{n-\ell(\alpha+1)}{2(\alpha+1)}\in\mathbb{N}_0$ if and only if  $n\equiv 0\pmod{2\alpha+2}$, while if $\ell$ is odd, if and only if  $n\equiv \alpha+1 \pmod{2\alpha+2}$.   Therefore, we can find an $\ell$ for which we have solutions  of the form \eqref{gphiP}  if and only if $n\equiv 0\pmod {\alpha+1}$. 

Then  we fix $n\equiv 0\pmod {\alpha+1}$, i.e. $n=i(\alpha+1)$ for some $i\in\mathbb{N}_0$, and we denote by  
 \begin{equation}\label{dell}
d_\ell := \binom{\ell+k-1}{\ell}-\binom{\ell+k-3}{\ell-2}
 \end{equation}
the dimension of the space of spherical harmonics of degree $\ell$, where the second binomial coefficient is understood to be zero for $\ell=0,1$. Then the dimension of the space generated by the functions in \eqref{gphiP} when $0\leq \ell \leq i$, $\ell \equiv i \pmod{2}$  considering also the dimension of the spherical harmonics of degree $\ell$ is exactly
\[
\sum_{\substack{\ell=0\\\ell \equiv i\!\!\! \pmod{2}}}^{i}d_\ell = \sum_{j=0}^{\left[\frac{i}{2}\right]}d_{i-2j} =\sum_{j=0}^{\left[\frac{i}{2}\right]}\left(\binom{i-2j+k-1}{i-2j}-\binom{i-2j+k-3}{i-2j-2}\right) = \binom{i+k-1}{i},
\]
where the last equality follows since the sum is telescopic.

In analogy with the previous section, we now try to construct a solution also when $n\equiv 1\pmod{\alpha+1}$. To do so we look for a solution  of the form 
\[
g(\varphi,\eta)=\sin\varphi|\sin\varphi|^{-\frac{\alpha}{\alpha+1}}h(\varphi)Y_\ell(\eta). 
\]
After easy computations, we find that $h$ solves the equation
\[
h''(\varphi) +\left(\frac{\alpha+2}{\alpha+1}\frac{\cos\varphi}{\sin\varphi}-(k-1)\frac{\sin\varphi}{\cos\varphi}\right)h'(\varphi) +\left(\frac{(n-1)(n+Q-1)}{(\alpha+1)^2}-\frac{\ell(\ell+k-2)}{\cos^2\varphi}\right)h(\varphi)=0.
\]
Again with the change of variable $\tau=\cos\varphi$, the equation becomes
\begin{equation}\label{hyp:eq2}
\tau^2(1-\tau^2)u''+\tau\left(\left(1-k-\frac{2\alpha+3}{\alpha+1}\right)\tau^2+k-1\right)u'-\left(\ell(\ell+k-2)-\tau^2\frac{(n-1)(n+Q-1)}{(\alpha+1)^2}\right)u=0. 
\end{equation}
As before, it can be checked that if 
\[
\widetilde{k}^2_{n,\ell}:=\frac{n-1-\ell(\alpha+1)}{2(\alpha+1)}\in\mathbb{N}_0,
\]
a solution to \eqref{hyp:eq2} is given by
\[
u(\tau)=\tau^\ell {_2F_1}\left(-\widetilde{k}^2_{n,\ell}, p+\widetilde{k}^2_{n,\ell},\gamma, \tau^2\right)=\tau^\ell\frac{\widetilde{k}^2_{n,\ell}!}{(\gamma)_{\widetilde{k}^2_{n,\ell}}}(-1)^{\widetilde{k}^2_{n,\ell}} P_{\widetilde{k}^2_{n,\ell}}^{(p-\gamma,\gamma-1)}(2\tau^2-1), 
\]
where  $\gamma=\ell+\frac{k}{2}$, and $p=\gamma+\frac{1}{2(\alpha+1)}$.
 Therefore
\[
h(\varphi)=\frac{\widetilde{k}^2_{n,\ell}!}{(\gamma)_{\widetilde{k}^2_{n,\ell}}}(-1)^{\widetilde{k}^2_{n,\ell}}(\cos\varphi)^\ell P_{\widetilde{k}^2_{n,\ell}}^{\left(\frac{1}{2(\alpha+1)},\ell+\frac{k}{2}-1\right)}(\cos2\varphi)
\]
and 
\begin{equation}\label{gphiP2}
g(\varphi,\eta)=\frac{\widetilde{k}^2_{n,\ell}!}{(\gamma)_{\widetilde{k}^2_{n,\ell}}}(-1)^{\widetilde{k}^2_{n,\ell}}\sin\varphi|\sin\varphi|^{-\frac{\alpha}{\alpha+1}}(\cos\varphi)^\ell P_{\widetilde{k}^2_{n,\ell}}^{\left(\frac{1}{2(\alpha+1)},\ell+\frac{k}{2}-1\right)}(\cos2\varphi)Y_\ell(\eta). 
\end{equation}

Similarly to the previous case, we observe that if $\ell$ is even, then $\widetilde{k}^2_{n,\ell}=\frac{n-1-\ell(\alpha+1)}{2(\alpha+1)}\in\mathbb{N}_0$ if and only if  $n\equiv 1 \pmod{2\alpha+2}$, while if $\ell$ is odd, if and only if $n\equiv \alpha+2\pmod{2\alpha+2}$. Therefore, we can find an $\ell$ for which we have solutions of the form \eqref{gphiP2} if and only if $n\equiv 1\pmod{\alpha+1}$.
We have then constructed two families of solutions, for $n \equiv 0\pmod{\alpha+1}$ and $n \equiv 1 \pmod{\alpha+1}$. Again  note that the dimension of the space generated by the functions in \eqref{gphiP2} for a fixed $n=i(\alpha+1)+1$, $i\in\mathbb{N}_0$, when  $0\leq \ell \leq i$, $\ell \equiv i \pmod{2}$  considering the dimension of the spherical harmonics of degree $\ell$, is exactly
\[
\sum_{\substack{\ell=0\\\ell \equiv i\!\!\! \pmod{2}}}^{i}d_\ell = \sum_{j=0}^{\left[\frac{i}{2}\right]}d_{i-2j} =\sum_{j=0}^{\left[\frac{i}{2}\right]}\left(\binom{i-2j+k-1}{i-2j}-\binom{i-2j+k-3}{i-2j-2}\right) = \binom{i+k-1}{i}.
\]

 Now fix $\ell \in \mathbb{N}_0$ and denote by $\{Y_{\ell,j}\}_{j=1,\ldots,d_\ell}$ an orthonormal  basis of spherical harmonics of degree $\ell$ in $\mathbb{S}^{k-1}$. We set 
\begin{equation}\label{autosp_2}
\mathscr{H}_n := \begin{cases}
&\mathrm{span} \Bigg\{\frac{\widetilde{k}^1_{n,\ell}!}{(\gamma)_{\widetilde{k}^1_{n,\ell}}}(-1)^{\widetilde{k}^1_{n,\ell}}(\cos\varphi)^\ell P_{\widetilde{k}^1_{n,\ell}}^{\left(-\frac{1}{2(\alpha+1)},\ell+\frac{k}{2}-1\right)}(\cos2\varphi)Y_{\ell,j}(\eta)\\
&\hspace{2cm}:  \ell\in \mathbb{N}_0 \mbox{ s.t. } \widetilde{k}^1_{n,\ell} \in \mathbb{N}_0,\, j=1,\ldots,d_\ell \Bigg\} \qquad
\mbox{ if } n \equiv 0\pmod{\alpha+1}\\
&\mathrm{span} \Bigg\{ \frac{\widetilde{k}^2_{n,\ell}!}{(\gamma)_{\widetilde{k}^2_{n,\ell}}}(-1)^{\widetilde{k}^2_{n,\ell}}\sin\varphi|\sin\varphi|^{-\frac{\alpha}{\alpha+1}}(\cos\varphi)^\ell P_{\widetilde{k}^2_{n,\ell}}^{\left(\frac{1}{2(\alpha+1)},\ell+\frac{k}{2}-1\right)}(\cos2\varphi)Y_{\ell,j}(\eta)\\
&\hspace{2cm}: \ell\in \mathbb{N}_0 \mbox{ s.t. } \widetilde{k}^2_{n,\ell} \in \mathbb{N}_0,\, j=1,\ldots,d_\ell \Bigg\} \qquad 
\mbox{ if } n \equiv 1\pmod{\alpha+1}.
\end{cases}
\end{equation}
By the above computation, for $n \equiv 0,1\pmod{\alpha+1}$ we have 
\begin{equation}\label{dim}
\dim \mathscr{H}_n = \begin{cases}
& \dbinom{\frac{n}{\alpha+1}+k-1}{\frac{n}{\alpha+1}} \qquad\qquad
\mbox{ if } n \equiv 0\pmod{\alpha+1}\vspace{.3cm}\\ 
&\dbinom{\frac{n-1}{\alpha+1}+k-1}{\frac{n-1}{\alpha+1}} \qquad\qquad 
\mbox{ if } n \equiv 1 \pmod{\alpha+1}.
\end{cases}
\end{equation}
In the case $k\geq2$, according to the parametrization of $\SA$, we have the identification
\[
L_\alpha^2(\SA) = L^2\left(\left(-\frac\pi2,\frac\pi2\right)\times \mathbb{S}^{k-1},\frac{1}{(\alpha+1)^k}|\sin \varphi|^{\frac{\alpha}{\alpha +1}} |\cos\varphi|^{k-1} \, d\varphi d\mathcal{H}^{k-1}\right).
\]
Indeed, it is easily seen that  on $\SA$
\[
 d\mathcal{H}_\alpha^{N-1} = \frac{1}{(\alpha+1)^k}|\sin \varphi|^{-\frac{\alpha}{\alpha+1}} |\cos\varphi|^{k-1} 
  \, d\varphi d\mathcal{H}^{k-1},\qquad  \psi_\alpha = |\sin \varphi|^{\frac{2\alpha}{\alpha+1}}.
 \]

We have the following theorem.
\begin{theorem}\label{ortk2}
Let $k\geq2$ and let $\mathscr{H}_n $ be as in \eqref{autosp_2}. The following orthogonal decomposition holds:
\[
L^2\left(\left(-\frac\pi2,\frac\pi2\right)\times \mathbb{S}^{k-1},\frac{1}{(\alpha+1)^k}|\sin \varphi|^{\frac{\alpha}{\alpha +1}} |\cos\varphi|^{k-1} \, d\varphi d\mathcal{H}^{k-1}\right) = \bigoplus_{\substack{n \in \mathbb{N}_0\\ n\equiv 0,1 \!\!\!\!\pmod{\alpha+1}}}\mathscr{H}_n.
\]
\end{theorem}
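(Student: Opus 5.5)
The plan follows the proof of Theorem \ref{ortk1}, now also separating the $\eta$-dependence through spherical harmonics on $\mathbb{S}^{k-1}$. Throughout, the weight is $w(\varphi)=|\sin\varphi|^{\frac{\alpha}{\alpha+1}}(\cos\varphi)^{k-1}$ on $(-\frac\pi2,\frac\pi2)$. The substitution $s=\cos 2\varphi\in(-1,1)$ on each half-interval $(0,\frac\pi2)$ and $(-\frac\pi2,0)$ will convert all the integrals into Jacobi integrals.

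\emph{Orthogonality.} First, two generators with different $\ell$, or with the same $\ell$ but different $j$, are orthogonal by orthonormality of $\{Y_{\ell,j}\}$ in $L^2(\mathbb{S}^{k-1})$. Second, even generators ($n\equiv0$) are orthogonal to odd ones ($n\equiv1$): the $\varphi$-integrand is odd in $\varphi$ because of the factor $\sin\varphi|\sin\varphi|^{-\frac{\alpha}{\alpha+1}}$. It remains to compare generators of the same type with the same $(\ell,j)$ and different $n$, i.e.\ different degree $m=\widetilde k_{n,\ell}$.

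In the even case the relevant integral is twice
\[
\int_0^{\pi/2}(\cos\varphi)^{2\ell} P_{m_1}^{(a,b)}(\cos2\varphi)P_{m_2}^{(a,b)}(\cos2\varphi)(\sin\varphi)^{\frac{\alpha}{\alpha+1}}(\cos\varphi)^{k-1}\,d\varphi,
\]
with $a=-\frac1{2(\alpha+1)}$ and $b=\ell+\frac k2-1$. Using $\sin^2\varphi=\frac{1-s}2$, $\cos^2\varphi=\frac{1+s}2$ and $d\varphi=-\frac{ds}{4\sin\varphi\cos\varphi}$, the weight becomes a constant times $(1-s)^{\frac{\alpha}{2(\alpha+1)}-\frac12}(1+s)^{\ell+\frac{k}{2}-1}=(1-s)^a(1+s)^b$. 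This is exactly the Jacobi weight for $P^{(a,b)}$, so the integral vanishes for $m_1\ne m_2$.

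In the odd case the extra factor $(\sin\varphi)^{\frac{2}{\alpha+1}}$ raises the exponent of $(1-s)$ by $\frac1{\alpha+1}$, giving $\frac1{2(\alpha+1)}$. This matches the parameter of $P^{(\frac1{2(\alpha+1)},b)}$, so these integrals also vanish.

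\emph{Completeness.} Let $f$ be orthogonal to every $\mathscr H_n$. I will split $f=f_e+f_o$ into its even and odd parts in $\varphi$. For each $(\ell,j)$, set $f^{e}_{\ell j}(\varphi)=\int_{\mathbb{S}^{k-1}} f_e(\varphi,\eta)Y_{\ell,j}(\eta)\,d\mathcal H^{k-1}$, and define $f^o_{\ell j}$ in the same way. By Fubini these coefficients are square integrable in $\varphi$ against $w$. Orthogonality to all even generators with fixed $(\ell,j)$ and all $m\in\mathbb{N}_0$ gives
\[
\int_0^{\pi/2} f^e_{\ell j}(\varphi)(\cos\varphi)^{\ell}P^{(a,b)}_m(\cos2\varphi)\,w\,d\varphi=0\qquad\forall m.
\]
Here every $m\in\mathbb{N}_0$ is attained, since $n=(2m+\ell)(\alpha+1)$ gives $\widetilde k^1_{n,\ell}=m$. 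After the substitution, this says that $F(s):=f^e_{\ell j}(\varphi)(\cos\varphi)^{-\ell}$ is orthogonal to all $P_m^{(a,b)}$ in $L^2((-1,1),(1-s)^a(1+s)^b)$. One checks that $F$ lies in this space: its norm equals $\int |f^e_{\ell j}|^2(\cos\varphi)^{-2\ell}(\cos\varphi)^{2\ell}w$ up to constants, which is finite. Completeness of the Jacobi polynomials then gives $F=0$, hence $f^e_{\ell j}=0$.

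The odd part is handled the same way. One divides by $(\sin\varphi)^{\frac1{\alpha+1}}(\cos\varphi)^\ell$ on $(0,\frac\pi2)$ and uses completeness of $P^{(\frac1{2(\alpha+1)},b)}_m$, with $n=(2m+\ell)(\alpha+1)+1$.

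Finally, completeness of $\{Y_{\ell,j}\}$ in $L^2(\mathbb{S}^{k-1})$ gives $f_e(\varphi,\cdot)=0$ and $f_o(\varphi,\cdot)=0$ for a.e.\ $\varphi$, so $f=0$. The main point needing care is this last passage: one must justify the Fubini step and verify that the divided functions $F$ are square integrable against the exact Jacobi weight. This reduces to matching the exponents computed above, which is the same bookkeeping used for the orthogonality.
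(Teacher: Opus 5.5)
Your proposal is correct and follows essentially the same route as the paper: separation of the $\eta$-dependence via the $Y_{\ell,j}$, splitting by parity in $\varphi$, the substitution $s=\cos2\varphi$ to reach the Jacobi weights with parameters $\bigl(\mp\frac{1}{2(\alpha+1)},\ell+\frac k2-1\bigr)$, and then completeness of the Jacobi polynomials and of the spherical harmonics. Your explicit check that the divided coefficient functions $F$ lie in the weighted Jacobi $L^2$ space, by Fubini and Cauchy--Schwarz, fills in a step the paper leaves implicit.
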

\begin{proof}
We first prove that $\mathscr{H}_n $ are mutually orthogonal. Indeed, let us first see that
if $n_1 \equiv 0$, $n_2 \equiv 0\pmod{\alpha+1}$, $n_1 \neq n_2$, then the spaces $\mathscr{H}_{n_1} $ and $\mathscr{H}_{n_2}$ are orthogonal.  By the orthogonality of spherical harmonics, it suffices to show that 
\[
\int_{-\frac\pi2}^{\frac\pi2}(\cos\varphi)^{2\ell} P_{\widetilde{k}^1_{n_1,\ell}}^{\left(-\frac{1}{2(\alpha+1)},\ell+\frac{k}{2}-1\right)}(\cos2\varphi)P_{\widetilde{k}^1_{n_2,\ell}}^{\left(-\frac{1}{2(\alpha+1)},\ell+\frac{k}{2}-1\right)}(\cos2\varphi)|\sin \varphi|^{\frac{\alpha}{\alpha +1}}(\cos\varphi)^{k-1}d\varphi=0.
\]
By the parity of the integrand, we have 
\begin{align*}
\int_{-\frac\pi2}^{\frac\pi2}&(\cos\varphi)^{2\ell} P_{\widetilde{k}^1_{n_1,\ell}}^{\left(-\frac{1}{2(\alpha+1)},\ell+\frac{k}{2}-1\right)}(\cos2\varphi)P_{\widetilde{k}^1_{n_2,\ell}}^{\left(-\frac{1}{2(\alpha+1)},\ell+\frac{k}{2}-1\right)}(\cos2\varphi)|\sin \varphi|^{\frac{\alpha}{\alpha +1}}(\cos\varphi)^{k-1}d\varphi\\
&=2\int_{0}^{\frac\pi2}(\cos\varphi)^{2\ell} P_{\widetilde{k}^1_{n_1,\ell}}^{\left(-\frac{1}{2(\alpha+1)},\ell+\frac{k}{2}-1\right)}(\cos2\varphi)P_{\widetilde{k}^1_{n_2,\ell}}^{\left(-\frac{1}{2(\alpha+1)},\ell+\frac{k}{2}-1\right)}(\cos2\varphi)|\sin \varphi|^{\frac{\alpha}{\alpha +1}}(\cos\varphi)^{k-1}d\varphi.
\end{align*}
Now we use the change of variable 
$
\cos(2\varphi)=x
$
for which 
$
dx=-2\sin(2\varphi)d\varphi=-4\sin\varphi\cos\varphi d\varphi,
$
and
\[
\cos^2(\varphi)=\frac{1+x}{2},\quad \sin^2(\varphi)=\frac{1-x}{2}.
\]
Accordingly, 
\[
d\varphi = -\frac{1}{2\sqrt{1-x}\sqrt{1+x}}dx.
\]
Then we have
\begin{align*}
&\int_{0}^{\frac\pi2}(\cos\varphi)^{2\ell} P_{\widetilde{k}^1_{n_1,\ell}}^{\left(-\frac{1}{2(\alpha+1)},\ell+\frac{k}{2}-1\right)}(\cos2\varphi)P_{\widetilde{k}^1_{n_2,\ell}}^{\left(-\frac{1}{2(\alpha+1)},\ell+\frac{k}{2}-1\right)}(\cos2\varphi)|\sin \varphi|^{\frac{\alpha}{\alpha +1}}(\cos\varphi)^{k-1}d\varphi\\
&=\int_{-1}^{1}\left(\frac{1+x}{2}\right)^{\ell+\frac{k-1}{2}}\left(\frac{1-x}{2}\right)^{\frac{\alpha}{2(\alpha+1)}}P_{\widetilde{k}^1_{n_1,\ell}}^{\left(-\frac{1}{2(\alpha+1)},\ell+\frac{k}{2}-1\right)}(x)P_{\widetilde{k}^1_{n_2,\ell}}^{\left(-\frac{1}{2(\alpha+1)},\ell+\frac{k}{2}-1\right)}(x)\frac{1}{2\sqrt{1-x}\sqrt{1+x}}dx\\
&=\frac{1}{2^{\ell+\frac{k+1}{2}+\frac{\alpha}{2(\alpha+1)}}}\int_{-1}^{1}\left(1+x\right)^{\ell+\frac{k}{2}-1}\left(1-x\right)^{-\frac{1}{2(\alpha+1)}}P_{\widetilde{k}^1_{n_1,\ell}}^{\left(-\frac{1}{2(\alpha+1)},\ell+\frac{k}{2}-1\right)}(x)P_{\widetilde{k}^1_{n_2,\ell}}^{\left(-\frac{1}{2(\alpha+1)},\ell+\frac{k}{2}-1\right)}(x) dx=0,
\end{align*}
by the orthogonality of Jacobi polynomials. 

As a second case, we prove that if $n_1 \equiv  1$, $n_2 \equiv 1 \pmod{\alpha+1}$, $n_1 \neq n_2$, then the spaces $\mathscr{H}_{n_1} $ and $\mathscr{H}_{n_2}$ are orthogonal.  By the orthogonality of spherical harmonics it suffices to show that 
\[
\int_{-\frac\pi2}^{\frac\pi2}(\cos\varphi)^{2\ell} P_{\widetilde{k}^2_{n_1,\ell}}^{\left(\frac{1}{2(\alpha+1)},\ell+\frac{k}{2}-1\right)}(\cos2\varphi)P_{\widetilde{k}^2_{n_2,\ell}}^{\left(\frac{1}{2(\alpha+1)},\ell+\frac{k}{2}-1\right)}(\cos2\varphi)|\sin \varphi|^{2-\frac{\alpha}{\alpha +1}}(\cos\varphi)^{k-1}d\varphi=0.
\]
By the parity of the integrand we have that
\begin{align*}
&\int_{-\frac\pi2}^{\frac\pi2}(\cos\varphi)^{2\ell} P_{\widetilde{k}^2_{n_1,\ell}}^{\left(\frac{1}{2(\alpha+1)},\ell+\frac{k}{2}-1\right)}(\cos2\varphi)P_{\widetilde{k}^2_{n_2,\ell}}^{\left(\frac{1}{2(\alpha+1)},\ell+\frac{k}{2}-1\right)}(\cos2\varphi)|\sin \varphi|^{2-\frac{\alpha}{\alpha +1}}(\cos\varphi)^{k-1}d\varphi\\
&=2\int_{0}^{\frac\pi2}(\cos\varphi)^{2\ell} P_{\widetilde{k}^2_{n_1,\ell}}^{\left(\frac{1}{2(\alpha+1)},\ell+\frac{k}{2}-1\right)}(\cos2\varphi)P_{\widetilde{k}^2_{n_2,\ell}}^{\left(\frac{1}{2(\alpha+1)},\ell+\frac{k}{2}-1\right)}(\cos2\varphi)|\sin \varphi|^{2-\frac{\alpha}{\alpha +1}}(\cos\varphi)^{k-1}d\varphi.
\end{align*}
With the same change of variable as above we get
\begin{align*}
&\int_{0}^{\frac\pi2}(\cos\varphi)^{2\ell} P_{\widetilde{k}^2_{n_1,\ell}}^{\left(\frac{1}{2(\alpha+1)},\ell+\frac{k}{2}-1\right)}(\cos2\varphi)P_{\widetilde{k}^2_{n_2,\ell}}^{\left(\frac{1}{2(\alpha+1)},\ell+\frac{k}{2}-1\right)}(\cos2\varphi)|\sin \varphi|^{2-\frac{\alpha}{\alpha +1}}(\cos\varphi)^{k-1}d\varphi\\
&=\int_{-1}^{1}\left(\frac{1+x}{2}\right)^{\ell+\frac{k-1}{2}}\left(\frac{1-x}{2}\right)^{1-\frac{\alpha}{2(\alpha+1)}}P_{\widetilde{k}^2_{n_1,\ell}}^{\left(\frac{1}{2(\alpha+1)},\ell+\frac{k}{2}-1\right)}(x)P_{\widetilde{k}^2_{n_2,\ell}}^{\left(\frac{1}{2(\alpha+1)},\ell+\frac{k}{2}-1\right)}(x)\frac{1}{2\sqrt{1-x}\sqrt{1+x}}dx\\
&=\frac{1}{2^{\ell+\frac{k+3}{2}-\frac{\alpha}{2(\alpha+1)}}}\int_{-1}^{1}\left(1+x\right)^{\ell+\frac{k}{2}-1}\left(1-x\right)^{\frac{1}{2(\alpha+1)}}P_{\widetilde{k}^2_{n_1,\ell}}^{\left(\frac{1}{2(\alpha+1)},\ell+\frac{k}{2}-1\right)}(x)P_{\widetilde{k}^2_{n_2,\ell}}^{\left(\frac{1}{2(\alpha+1)},\ell+\frac{k}{2}-1\right)}(x) dx=0,
\end{align*}
again by the orthogonality of Jacobi polynomials.

Finally, we prove that if $n_1 \equiv  0$, $n_2 \equiv 1 \pmod{\alpha+1}$, then the spaces $\mathscr{H}_{n_1} $ and $\mathscr{H}_{n_2}$ are orthogonal.  By the orthogonality of spherical harmonics, it suffices to show that 
\[
\int_{-\frac\pi2}^{\frac\pi2}(\cos\varphi)^{2\ell} P_{\widetilde{k}^1_{n_1,\ell}}^{\left(-\frac{1}{2(\alpha+1)},\ell+\frac{k}{2}-1\right)}(\cos2\varphi)P_{\widetilde{k}^2_{n_2,\ell}}^{\left(\frac{1}{2(\alpha+1)},\ell+\frac{k}{2}-1\right)}(\cos2\varphi)\sin \varphi (\cos\varphi)^{k-1}d\varphi=0,
\]
that holds true since the integrand is odd.

Now let $f \in L^2\left(\left(-\frac\pi2,\frac\pi2\right)\times \mathbb{S}^{k-1},\frac{1}{(\alpha+1)^k}|\sin \varphi|^{\frac{\alpha}{\alpha +1}} |\cos\varphi|^{k-1} \, d\varphi d\mathcal{H}^{k-1}\right) $. We want to show that if $f  $  is orthogonal to every $\mathscr H_n$, then $f=0$. First we decompose $f$ in its even and odd part with respect to $\varphi \in (-\frac{\pi}{2},\frac{\pi}{2})$, i.e., 
  \[
  f= f_e+ f_o,
  \]
  where
    \[
  f_e(\eta,\varphi)= \frac{f(\eta,\varphi)+f(\eta,-\varphi)}{2}, \quad   f_o(\eta,\varphi)= \frac{f(\eta,\varphi)-f(\eta,-\varphi)}{2}.
  \]
First suppose that $f$ is orthogonal to every $\mathscr H_n$, with  $n\equiv 0\pmod{\alpha+1}$. Notice that, since $n=i(\alpha+1)$ for some $i\in\mathbb{N}_0$, setting 
\[
m=\widetilde{k}^1_{n,\ell}=\frac{i(\alpha+1)-\ell(\alpha+1)}{2(\alpha+1)}=\frac{i-\ell}{2},
\]
implies that, for a fixed $\ell$, the number $m$ can range over all nonnegative integers, by appropriately choosing $i$. The above formula also shows that after fixing $m$, taking $n=(\ell+2m)(\alpha+1)$ allows $\ell$ to vary through all nonnegative integers. 
Therefore, for every $m \in \mathbb{N}_0$, $\ell \in\mathbb{N}_0$ and every 
$j = 1,\ldots, d_\ell$, we have 
\begin{align*}
\int_{-\frac\pi2}^{\frac\pi2}\int_{\mathbb{S}^{k-1}}f(\eta,\varphi) (\cos\varphi)^\ell P_{m}^{\left(-\frac{1}{2(\alpha+1)},\ell+\frac{k}{2}-1\right)}(\cos2\varphi)Y_{\ell,j}(\eta)     
|\sin \varphi|^{\frac{\alpha}{\alpha +1}}(\cos\varphi)^{k-1} 
  \,  d\mathcal{H}^{k-1}d\varphi=0.
\end{align*}
Note that, since $(\cos\varphi)^\ell P_{m}^{\left(-\frac{1}{2(\alpha+1)},\ell+\frac{k}{2}-1\right)}(\cos2\varphi)Y_{\ell,j}(\eta)   
|\sin \varphi|^{\frac{\alpha}{\alpha +1}}(\cos\varphi)^{k-1}$ is even in $\varphi$, the previous equation implies  
\begin{align*}
\int_{-\frac\pi2}^{\frac\pi2}\int_{\mathbb{S}^{k-1}}f_e(\eta,\varphi) (\cos\varphi)^\ell P_{m}^{\left(-\frac{1}{2(\alpha+1)},\ell+\frac{k}{2}-1\right)}(\cos2\varphi)Y_{\ell,j}(\eta)     
|\sin \varphi|^{\frac{\alpha}{\alpha +1}}(\cos\varphi)^{k-1} 
  \,  d\mathcal{H}^{k-1}d\varphi=0.
\end{align*}
By Fubini's Theorem and again by parity, we have 
\begin{align*}
0&=\int_{-\frac\pi2}^{\frac\pi2} (\cos\varphi)^{2\ell} P_{m}^{\left(-\frac{1}{2(\alpha+1)},\ell+\frac{k}{2}-1\right)}(\cos2\varphi)F^e_{\ell,j}(\varphi)     
|\sin \varphi|^{\frac{\alpha}{\alpha +1}}(\cos\varphi)^{k-1} 
  \, d\varphi\\
  & =2\int_{0}^{\frac\pi2} (\cos\varphi)^{2\ell} P_{m}^{\left(-\frac{1}{2(\alpha+1)},\ell+\frac{k}{2}-1\right)}(\cos2\varphi)F^e_{\ell,j}(\varphi)     
|\sin \varphi|^{\frac{\alpha}{\alpha +1}}(\cos\varphi)^{k-1} 
  \, d\varphi, 
\end{align*}
where 
\[
F^e_{\ell,j}(\varphi):= \cos(\varphi)^{-\ell}\int_{\mathbb{S}^{k-1}}Y_{\ell,j}(\eta)f_e(\eta,\varphi)\,d\mathcal{H}^{k-1} \qquad \varphi \in \left(-\frac\pi2,\frac\pi2\right).
\]
With the usual change of variable and by the completeness of $\left\{P^{(-\frac{1}{2(\alpha+1)},\ell+\frac{k}{2}-1)}_m\right\}_{m\in\N_0}$  in the space $L^2\left([-1,1], (1-x)^{-\frac{1}{2(\alpha+1)}}(1+x)^{\ell+\frac{k}{2}-1} dx\right)$, we get that 
\[
F^e_{\ell,j}(\varphi) =0, \quad \mbox{ for a.e. } \varphi\in \left(-\frac\pi2,\frac\pi2\right).
\]
Therefore, by the completeness of the spherical harmonics $\{Y_{\ell,j}\}$ we get that $f_e =0$.

Now, suppose that $f=f_o$ is  orthogonal to every $\mathscr H_n$, with  $n\equiv 1\pmod{\alpha+1}$. Arguing as above, this implies that for every $m \in \mathbb{N}_0$, $\ell \in\mathbb{N}_0$ and every 
$j = 1,\ldots, d_\ell$, we have 
\begin{align*}
\int_{-\frac\pi2}^{\frac\pi2}\int_{\mathbb{S}^{k-1}}f_o(\eta,\varphi)\sin\varphi|\sin\varphi|^{-\frac{\alpha}{\alpha+1}} (\cos\varphi)^\ell &P_{m}^{\left(\frac{1}{2(\alpha+1)},\ell+\frac{k}{2}-1\right)}(\cos2\varphi)Y_{\ell,j}(\eta)  \\  
&\times|\sin \varphi|^{\frac{\alpha}{\alpha +1}}(\cos\varphi)^{k-1} 
  \,  d\mathcal{H}^{k-1}d\varphi=0.
\end{align*}
By Fubini's Theorem and by parity, we have 
\begin{align*}
0&=\int_{-\frac\pi2}^{\frac\pi2} (\cos\varphi)^{2\ell} P_{m}^{\left(\frac{1}{2(\alpha+1)},\ell+\frac{k}{2}-1\right)}(\cos2\varphi)F^o_{\ell,j}(\varphi)     
|\sin \varphi|^{2-\frac{\alpha}{\alpha +1}}(\cos\varphi)^{k-1} 
  \, d\varphi \\
  & =2\int_{0}^{\frac\pi2}  (\cos\varphi)^{2\ell} P_{m}^{\left(\frac{1}{2(\alpha+1)},\ell+\frac{k}{2}-1\right)}(\cos2\varphi)F^o_{\ell,j}(\varphi)     
|\sin \varphi|^{2-\frac{\alpha}{\alpha +1}}(\cos\varphi)^{k-1} 
  \, d\varphi, 
\end{align*}
where 
\[
F^o_{\ell,j}(\varphi):= (\sin\varphi)^{-1} |\sin \varphi|^{\frac{\alpha}{\alpha +1}}\cos(\varphi)^{-\ell}\int_{\mathbb{S}^{k-1}}Y_{\ell,j}(\eta)f_o(\eta,\varphi)\,d\mathcal{H}^{k-1}, \qquad \varphi \in\left(-\frac\pi2,\frac\pi2\right).
\]
With the same change of variable as above, and by the completeness of the Jacobi polynomials $\left\{P^{\left(\frac{1}{2(\alpha+1)},\ell+\frac{k}{2}-1\right)}_m\right\}_{m\in\N_0}$  in the space $L^2\left([-1,1], (1-x)^{\frac{1}{2(\alpha+1)}}(1+x)^{\ell+\frac{k}{2}-1} dx\right)$, we get
\[
F^o_{\ell,j}(\varphi) =0 \quad \mbox{ for a.e. } \varphi\in \left(-\frac\pi2,\frac\pi2\right).
\]
Again, by the completeness of the spherical harmonics $\{Y_{\ell,j}\}$, we get that $f_o =0$. Therefore $f=0$.
\end{proof}

From the previous Theorem \ref{ortk2} and equation \eqref{lambda_sigma}, together with the subsequent computations, we can immediately deduce the following theorem. 

\begin{theorem}
Let $k\geq2$. The values taken by the eigenvalues $\{\mu_j\}_{j \in \mathbb{N}_0}$ of the spherical operator $-\mathcal{L}$ are given by 
\[
\{\mu_j:j \in \mathbb{N}_0\}  = \left\{ \frac{n(n+Q-2)}{(\alpha+1)^2}: n \in \mathbb{N}_0, n \equiv 0,1 \pmod{\alpha+1}\right\}. 
\]
For each admissible $n$, the corresponding eigenspace is $\mathscr{H}_n$ as in \eqref{autosp_2}, and the multiplicity of the eigenvalue is 
\begin{equation}
\begin{cases}
& \dbinom{\frac{n}{\alpha+1}+k-1}{\frac{n}{\alpha+1}} \qquad\qquad
\mbox{ if } n \equiv 0\pmod{\alpha+1}\vspace{.3cm}\\ 
&\dbinom{\frac{n-1}{\alpha+1}+k-1}{\frac{n-1}{\alpha+1}} \qquad\qquad 
\mbox{ if } n \equiv 1 \pmod{\alpha+1}.
\end{cases}
\end{equation}
Equivalently, the eigenvalues are given by the two disjoint families
\[
\{\mu_j:j \in \mathbb{N}_0\}=  \left\{i\left(i+\frac{Q-2}{\alpha+1}\right) :i \in \mathbb{N}_0\right\}   \cup \,
\left\{(i+k)\left(i+k-\frac{Q-2}{\alpha+1}\right): i \in \mathbb{N}_0  \right\}.  
\]
\end{theorem}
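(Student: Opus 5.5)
The plan is to read the statement off Theorem \ref{ortk2}, in the same way as for the case $k=1$. By construction, every function in \eqref{gphiP} and \eqref{gphiP2} solves \eqref{lambda_sigma} with $\mu=\frac{n(n+Q-2)}{(\alpha+1)^2}$. Hence $\mathscr{H}_n$, as defined in \eqref{autosp_2}, consists of eigenfunctions of $-\mathcal{L}$ for this value of $\mu$.

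\emph{Admissibility.} I first check that these functions are genuine eigenfunctions in the variational sense on $W^{1,2}_\alpha(\SA)$. Via $x=\rho\sin\varphi|\sin\varphi|^{-\frac{\alpha}{\alpha+1}}$ and $|y|=\frac{\rho^{\alpha+1}}{\alpha+1}\cos\varphi$, each function $\rho^n g$ is a $\delta_\lambda$-homogeneous polynomial in $(x,y)$. In the odd family the factor $\sin\varphi|\sin\varphi|^{-\frac{\alpha}{\alpha+1}}$ is exactly $x/\rho$, so it is smooth across $\Sigma$ as well. Moreover $(\cos\varphi)^\ell Y_\ell(\eta)$ is a harmonic polynomial in $y$ of degree $\ell$ divided by a power of $\rho$, and $P_m(\cos 2\varphi)$ is a polynomial in $\cos^2\varphi$ and $\sin^2\varphi$, again expressible through $|y|^2$ and $x^2$. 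So the restrictions are smooth on $\SA$. Since $\Delta_\alpha(\rho^n g)=0$ classically, integrating by parts against test functions gives the weak eigenvalue equation. This is the step I expect to require the most care, because the degeneracy at $\varphi=0$ has to be handled. I would rely on the polynomial representation rather than the $\varphi$-coordinates, and if needed appeal to the framework of \cite[\S 3]{AbFeLu25}.

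\emph{Exhaustiveness and eigenspaces.} Theorem \ref{ortk2} says that the $\mathscr{H}_n$ are mutually orthogonal and their span is dense. Let $u$ be an eigenfunction with eigenvalue $\mu$, and suppose $\mu\neq\frac{n(n+Q-2)}{(\alpha+1)^2}$ for a given admissible $n$. By symmetry of the variational form, $u$ is then orthogonal to $\mathscr{H}_n$. Thus if $\mu$ differs from every value in the list, completeness forces $u=0$. If instead $\mu$ equals the value attached to some $n_0$, then $u$ is orthogonal to all $\mathscr{H}_n$ with $n\ne n_0$. This uses that $n\mapsto n(n+Q-2)$ is strictly increasing on $\mathbb{N}_0$ (as $Q\ge2$), so distinct $n$ give distinct eigenvalues. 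Completeness then gives $u\in\mathscr{H}_{n_0}$. Hence the eigenspace is exactly $\mathscr{H}_{n_0}$, and its dimension is given by \eqref{dim}. For \eqref{dim} I use that the functions in \eqref{autosp_2} are nonzero and pairwise orthogonal: distinct $(\ell,j)$ are separated by the orthogonality of spherical harmonics, and the parity constraint fixes $m$ once $\ell$ is fixed.

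\emph{Reparametrization and disjointness.} For $n=i(\alpha+1)$, direct substitution gives $i\bigl(i+\frac{Q-2}{\alpha+1}\bigr)$. For $n=i(\alpha+1)+1$, note that $Q-2=(\alpha+1)k-1$. Then $n+Q-2=(i+k)(\alpha+1)$ and $n=(i+k)(\alpha+1)-(Q-2)$, so
\[
\frac{n(n+Q-2)}{(\alpha+1)^2}=(i+k)\left(i+k-\frac{Q-2}{\alpha+1}\right).
\]
The two families are disjoint because they come from distinct values of $n$ and $n\mapsto n(n+Q-2)$ is injective on $\mathbb{N}_0$.
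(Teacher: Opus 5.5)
Your proposal is correct and follows the same route as the paper. The paper reads the spectrum, eigenspaces and multiplicities off Theorem~\ref{ortk2} and \eqref{dim}, and obtains the two families by substituting $n=i(\alpha+1)$ and $n=i(\alpha+1)+1$. You make explicit what the paper leaves implicit: the admissibility of the functions in $\mathscr{H}_n$, the symmetry argument that rules out other eigenvalues and forces the eigenspace to be exactly $\mathscr{H}_{n_0}$, and linear independence for the dimension count.

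The only real divergence is disjointness. You use strict monotonicity of $n\mapsto n(n+Q-2)$ on the two distinct residue classes, whereas the paper reduces a coincidence to $\frac{Q-2}{\alpha+1}=i_2-i_1+k$ and contradicts the non-integrality of $\frac{Q-2}{\alpha+1}=k-\frac{1}{\alpha+1}$. Both arguments are valid.
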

\begin{proof}
The first set equality and the statement about the multiplicity follow from the previous computations, Theorem \ref{ortk2} and \eqref{dim}. As for the second set equality, if $n\equiv 0\pmod{\alpha+1}$, then $n=i(\alpha+1)$ for some $i\in \mathbb{N}_0$. Then
\[
\frac{n(n+Q-2)}{(\alpha+1)^2}= \frac{i(\alpha+1)(i(\alpha+1)+Q-2)}{(\alpha+1)^2} = i\left(i+\frac{Q-2}{\alpha+1}\right).
\]
If $n\equiv 1\pmod{\alpha+1}$, then $n=i(\alpha+1)+1$ for some $i\in \mathbb{N}_0$. Then
\begin{align*}
\frac{n(n+Q-2)}{(\alpha+1)^2}&= \frac{(i(\alpha+1)+1)(i(\alpha+1)+1+Q-2)}{(\alpha+1)^2} \\
&= \frac{(i(\alpha+1)+1)(i(\alpha+1)+k(\alpha+1))}{(\alpha+1)^2} \\
&= (i+k)\left(i+\frac{1}{\alpha+1}\right)\\
&=(i+k)\left(i+k-\frac{Q-2}{\alpha+1}\right).
\end{align*}
Finally, we prove that these two families are disjoint. Suppose by contradiction that for some $i_1,i_2 \in \mathbb{N}_0$
\[
i_1\left(i_1+\frac{Q-2}{\alpha+1}\right) = (i_2+k)\left(i_2+k-\frac{Q-2}{\alpha+1}\right). 
\]
This yields
\[
\frac{Q-2}{\alpha+1} = i_2-i_1+k
\]
which is impossible since 
\[
\frac{Q-2}{\alpha+1}= k-\frac{1}{\alpha+1}.
\] 
\end{proof}

\subsection*{AI statement}
The authors declare that AI tools were used solely to audit mathematical results and improve the exposition.

\subsection*{Acknowledgments}
The authors are members of the ``Gruppo Nazionale per l’Analisi Matematica, la Probabilità
e le loro Applicazioni'' (GNAMPA) of the “Istituto Nazionale di Alta Matematica” (INdAM). P.L. acknowledges the support of the project GNAMPA
PROGETTI DI RICERCA 2026, CUP E53C25002010001.

\footnotesize
\bibliographystyle{plain}
\bibliography{biblio}
\end{document}